\newtheorem{thm}{Theorem}[section]
\newtheorem{lemma}{Lemma}[section]
\newtheorem{prop}[thm]{Proposition}
\theoremstyle{definition}
\newtheorem{defn}[thm]{Definition}
\newtheorem{remark}{Remark}[thm]
\newtheorem*{condition}{Condition}
\newcommand{\set}[1]{\left\{#1\right\}}
\newcommand{\C}{\mathcal{O}(1)}
\newcommand{\abs}[1]{\left|#1\right|}
\newcommand{\R}{\mathbb{R}}
\newcommand{\floor}[1]{\lfloor#1\rfloor}
\def\R{\ensuremath{\mathbb R}}
\def\N{\ensuremath{\mathbb N}}
\def\I{\ensuremath{{\mathbbm{1}}}}
\def\e{\ensuremath{\text e}}
\def\R{\ensuremath{\mathbb R}}
\def\N{\ensuremath{\mathbb N}}
\def\I{\ensuremath{{\bf 1}}}
\def\e{\ensuremath{\text e}}
\def\S{\ensuremath{\mathcal S}}
\def\RR{\ensuremath{\mathcal R}}
\def\C{\ensuremath{\mathcal C}}
\def\p{\ensuremath{\mathbb P}}
\def\ie{{\em i.e.}, }
\def\E{\mathbb E}
\def\eps{\varepsilon}
\title[Poisson dispersing billiards]
{Poisson return time statistics for  dispersing billiard maps and \?.}
\date{\today}
\begin{document}

\title[]{Convergence of Rare Events Point Processes to the Poisson for billiards}

\author[J. M. Freitas]{Jorge Milhazes Freitas}
\address{Jorge Milhazes Freitas\\ Centro de Matem\'{a}tica \& Faculdade de Ci\^encias da Universidade do Porto\\ Rua do
Campo Alegre 687\\ 4169-007 Porto\\ Portugal}
\email{jmfreita@fc.up.pt}
\urladdr{http://www.fc.up.pt/pessoas/jmfreita}

\author[N. Haydn]{Nicolai Haydn}
\address{Nicolai Haydn\\ Department of Mathematics\\
University of Southern California\\
Los Angeles\\
90089-2532\\
USA} \email{nhaydn@usc.edu}

\author[M. Nicol]{Matthew Nicol}
\address{Matthew Nicol\\ Department of Mathematics\\
University of Houston\\
Houston\\
TX 77204\\
USA} \email{nicol@math.uh.edu}
\urladdr{http://www.math.uh.edu/~nicol/}

\begin{abstract}
We show that for planar dispersing billiards the return times distribution is, in the 
limit, Poisson for metric balls almost everywhere w.r.t.\ the SRB measure. 
Since the Poincar\'e return map is piecewise smooth 
but  becomes singular at the boundaries of the partition elements, recent results on
the limiting distribution of return times cannot be applied as they require 
the maps to have bounded second derivatives everywhere. We first prove the Poisson limiting 
distribution assuming exponentially decaying correlations. 
For the case when the correlations decay polynomially,
 we induce on a subset on which the induced map has exponentially decaying correlations. 
 We then  prove a general theorem according to which the limiting return times statistics of the 
 original map and the induced map are the same.

\end{abstract}

\thanks{JMF was partially supported by FCT grant SFRH/BPD/66040/2009, by FCT (Portugal) projects PTDC/MAT/099493/2008 and PTDC/MAT/120346/2010, which are financed by national and European Community structural funds through the programs  FEDER and COMPETE . JMF was also supported by CMUP, which is financed by FCT (Portugal) through the programs POCTI and POSI, with national
and European Community structural funds.}

\date{\today}

\maketitle

\section{Introduction}

The purpose of this work is to study the statistical laws ruling the occurrence of rare events for  billiards. The starting point is the analysis of stationary stochastic processes $X_0, X_1,\ldots$ generated by the dynamics of the billiards considered. The rare events will be the exceedances of an high threshold $u$, meaning the occurrence of the event $X_j>u$, for some $j\in\N_0$, which will correspond to the entrance of the orbit, at time $j\in\N_0$, in a small region of the phase space, namely in small neighbourhood of a certain point $\zeta$ chosen in the phase space.  

We will consider the Rare Events Point Processes (REPP), which keep record of the number of exceedances (or entrances in such small balls around $\zeta$) in a certain normalised time interval. When the waiting times (conveniently normalised) between the occurrence of rare events is typically exponential, then one expects the REPP to assume a Poisson type behaviour. 

Recently Chazottes and Collet, \cite{CC13} showed that for any two-dimensional dynamical system $(T,X,\mu)$ modeled by a Young Tower which has bounded
derivative and  exponential tails (and hence exponential
decay of correlations for H\"older observations) the REPP converges typically in distribution to a Poisson process, when the balls around $\zeta$ shrink towards its centre. 
They also gave rates of convergence. Unfortunately their proof relies  on both 
the boundedness of the derivative of $T$, $|DT|_{\infty}<C$, and exponential tails. Their results do not apply to 
exponentially mixing Sinai dispersing billiards (which have unbounded derivative) nor to billiard systems with polynomial
rates of decay of correlation. 

Our goal is to show that for planar Sinai dispersing billiards (with finite or infinite horizon) and also for certain billiard systems with polynomial decay of correlations, the REPP, typically, converges in distribution to a standard Poisson process, when the thresholds $u$ converges to the maximum value attainable and the corresponding neighbourhoods shrink to $\zeta$. This typically means that the convergence of the REPP to a standard Poisson occurs for a.e.\ point $\zeta$ chosen in the phase space, with respect to the invariant measure, which, in our case, is equivalent to Lebesgue measure.

Note that there are two perspectives to look at rare events in a dynamical setting: one consists in looking at the exceedances as extreme values for the random variables $X_j$, for $j\in\N_0$, in which case, one uses tools of Extreme Value Theory; the other consists in looking at rare events as hits or returns by the orbits to small sets in the phase space which is tied to the phenomenon of recurrence. These two perspectives are linked and essentially they are just two sides of the same coin. This connection was first observed in \cite{C01} and formally established, in \cite{FFT10,FFT11}.  

Our proofs are based upon extreme value theory and some remarkable ideas of Collet~\cite{C01}. We first give proofs for Sinai dispersing billiards, then show how recent work of Chernov and Zhang~\cite{CZ05} and  Markarian~\cite{M04} allows us to extend these results to billiards with polynomial
decay by inducing on a subset for which the return map has good hyperbolic properties. 

\section{The setting and statement of results}

Let $(T,X,\mu)$ be an ergodic transformation of  a probability space. We suppose that $X$ is 
embedded in a   Riemannian manifold of dimension $d$. Suppose that the time series $X_0, X_1,\ldots$ arises from such a system simply by evaluating a given  observable $\varphi:X\to\R\cup\{\pm\infty\}$ along the orbits of the system, or in other words, the time evolution given by successive iterations by $T$:
\begin{equation}
\label{eq:def-stat-stoch-proc-DS} X_n=\varphi\circ T^n,\quad \mbox{for
each } n\in {\mathbb N}.
\end{equation}
Clearly, $X_0, X_1,\ldots$ defined in this way is not an independent sequence.  However, $T$-invariance of $\mu$ guarantees that this stochastic process is stationary.

We suppose that the r.v. $\varphi:X\to\R\cup\{\pm\infty\}$
achieves a global maximum at $\zeta\in X$ (we allow
$\varphi(\zeta)=+\infty$).  We assume that $\varphi$ and $\mu$ are sufficiently regular so that, for $u$ sufficiently close to $u_F:=\varphi(\zeta)$,  the event 
\begin{equation*}
\label{def:U}
U(u):=\{x\in X:\; \varphi(x)>u\}=\{X_0>u\}
\end{equation*} corresponds to a topological ball centred at $\zeta$. Moreover, the quantity $\mu(U(u))$, as a function of $u$, varies continuously on a neighbourhood of $u_F$.

We are interested in studying the extremal behaviour of the stochastic process $X_0, X_1,\ldots$ which is tied to the occurrence of exceedances of high levels $u$. The occurrence of an exceedance at time $j\in\N_0$ means that the event $\{X_j>u\}$ occurs, where $u$ is close to $u_F$. Observe that a realisation of the stochastic process $X_0, X_1,\ldots$ is achieved if we pick, at random and according to the measure $\mu$, a point $x\in X$, compute its orbit and evaluate $\varphi$ along it. Then saying that an exceedance occurs at time $j$ means that the orbit of the point $x$ hits the ball $U(u)$ at time $j$, \ie $f^j(x)\in U(u)$. 

For more details on the choice of the observables so that the above properties hold and the link between extreme values and hitting/returns to small sets endures we suggest the reader look at \cite[Section~4.1]{F13}. However, for definiteness we mention that a possible choice for $\varphi$ in this setting, where the invariant measure $\mu$ will be equivalent to Lebesgue measure, is the following:
consider some point $\zeta\in X$ and take 
\begin{equation}
\label{eq:observable-particular}
\varphi(x)=-\log (\mbox{dist}(x,\zeta)),
\end{equation}
 where $\mbox{dist}(\cdot,\cdot)$ denotes the usual euclidean metric in $X$.

A very important issue in order to take limits is to establish the rate of convergence of $u$ to $u_F$. For that we will consider sequences $(u_n)_{n\in\N}$ such that
\begin{equation}
\label{eq:def-un}
\lim_{n\to\infty}n\mu(X_0>u_n)=\tau,
\end{equation}
for some $\tau\geq 0$.
The motivation for using such normalising sequences comes from the case when $X_0, X_1,\ldots$ are independent and identically distributed (i.i.d.). Let $M_n=\max\{X_0,\ldots, X_{n-1}\}$. In this i.i.d.\ setting, it is clear that $\p(M_n\leq u)= (F(u))^n$, where $F$ is the d.f.\ of $X_0$, \ie $F(x):=\p(X_0\leq x)$. Hence, condition \eqref{eq:def-un} implies that
\[
\p(M_n\leq u_n)= (1-\p(X_0>u_n))^n\sim\left(1-\frac\tau n\right)^n\to\e^{-\tau},
\]
as $n\to\infty$. This means that the waiting times between exceedances of $u_n$ is approximately, exponentially distributed.

For example, if $\varphi$ is given as in \eqref{eq:observable-particular} and if $\mu$ has a density with respect to Lebesgue measure $m$,  where $\rho (\zeta):=\frac{d\mu}{dm} (\zeta),$ then the scaling constants can be chosen as  
$u_n =(1/d)\log n +\rho (\zeta)$.

\subsection{Rare Events Points Processes and respective convergence}

 Before we give the formal definition for REPP, we introduce some formalism. Let $\S$ denote the semi-ring of subsets of  $\R_0^+$ whose elements
are intervals of the type $[a,b)$, for $a,b\in\R_0^+$. Let $\RR$
denote the ring generated by $\S$. Recall that for every $J\in\RR$
there are $k\in\N$ and $k$ intervals $I_1,\ldots,I_k\in\S$ such that
$J=\cup_{i=1}^k I_j$. In order to fix notation, let
$a_j,b_j\in\R_0^+$ be such that $I_j=[a_j,b_j)\in\S$. For
$I=[a,b)\in\S$ and $\alpha\in \R$, we denote $\alpha I:=[\alpha
a,\alpha b)$ and $I+\alpha:=[a+\alpha,b+\alpha)$. Similarly, for
$J\in\RR$ define $\alpha J:=\alpha I_1\cup\cdots\cup \alpha I_k$ and
$J+\alpha:=(I_1+\alpha)\cup\cdots\cup (I_k+\alpha)$.

\begin{defn}
\label{def:REPP}
For stationary stochastic processes $X_0, X_1,\ldots$ and sequences $(u_n)_{n\in\N}$ satisfying \eqref{eq:def-un}, we define the \emph{rare event point process} (REPP) by
counting the number of exceedances (or hits to $U(u_n)$) during the (re-scaled) time period $v_nJ\in\RR$, where $J\in\RR$ and $v_n:=1/\mu(X_0>u_n)$ is, according to Kac's Theorem, the expected waiting time before the occurrence of one exceedance. To be more precise, for every $J\in\RR$, set
\begin{equation*}
 N_n(J):=
\sum_{j\in v_nJ\cap\N_0}\I_{X_j>u_n}.
\end{equation*}
\end{defn}

Our main result states that the REPP $N_n$ converges in distribution to a standard Poisson process. For the sake of completeness, we give next the meaning of convergence in distribution of point processes and also the definition of a standard Poisson process.  (See \cite{K86} for more details).

\begin{defn}
\label{def:convergence-point-processes}
Suppose that $(N_n)_{n\in\N}$ is a sequence of point processes defined on $\S$ and $N$ is another point process defined on $\S$. Then, we say that $N_n$ \emph{converges in distribution to} $N$ if the sequence of vector r.v.s $(N_n(J_1), \ldots, N_n(J_k))$ converges in distribution to $(N(J_1), \ldots, N(J_k))$, for every $k\in\N$ and all $J_1,\ldots, J_k\in\S$ such that $N(\partial J_i)=0$ a.s., for $i=1,\ldots,k$.
\end{defn}

\begin{defn}
\label{def:poisson-process}
Let $T_1, T_2,\ldots$ be  an i.i.d.\ sequence of random variables with common exponential distribution of mean $1/\theta$. Given this sequence of r.v., for $J\in\RR$, set
$$
N(J)=\int \I_J\;d\left(\sum_{i=1}^\infty \delta_{T_1+\ldots+T_i}\right),
$$ 
where $\delta_t$ denotes the Dirac measure at $t>0$.  We say that $N$ defined this way is a Poisson process of intensity $\theta$.
\end{defn}

To simplify the notation, whenever $J=[0,t)$ for some $t>0$ then we will write 
$$N_n(t):=N_n([0,t))) \quad \mbox{and}\quad N(t):=N([0,t)).$$

\begin{remark}
\label{rem:poisson-process}
If $\theta=1$ then we say that $N$ is a standard Poisson process and, for every $t>0$, the random variable $N(t)$ has a Poisson distribution of mean $t$. 
\end{remark}

\begin{remark}
\label{rem:EVL-HTS}
In the literature, the study of rare events is often tied to the existence of Extreme Value Laws (EVL) or the existence of Hitting Times Statistics (HTS) and Return Times Statistics (RTS). The existence of EVL has to do with the existence of distributional limits for $M_n=\max\{X_0, \ldots,X_{n-1}\}$. On the other hand, the existence of exponential HTS means the existence of a distributional limit for the elapsed time until the orbit hits certain balls around $\zeta$, when properly normalised. When the orbit starts in the target ball around $\zeta$ and consequently we look at the first return (rather than hit) and its limit distribution then we say we have RTS, instead. Since no exceedances of $u_n$ up to time $n$ means that there are no entrances in a certain ball around $\zeta$ up to time $n$, the existence of EVLs is equivalent to the existence of HTS (see \cite{FFT10,FFT11}). Moreover, in~\cite{HLV05} it was proved that an integral formula relates the 
distributions of HTS and RTS, and which in particular yields the standard exponential distribution
as its unique fixed point.  
Note that the convergence in distribution of the point processes $N_n$ to a standard Poisson process $N$ implies that  $\lim_{n\to\infty}\mu(M_n\leq u_n)=\lim_{n\to\infty}\mu(N_n(\tau)=0)=\e^{-\tau}$, which means that there exists an exponential EVL for $M_n$, which implies the existence of exponential HTS, for balls around $\zeta$, which in turn implies the existence of exponential RTS, for balls around $\zeta$.  We note also that certain extreme value statistics  lift from base transformations to 
suspension flows~\cite{HNT12}.
\end{remark}

Leadbetter \cite{L73} introduced some conditions on the dependence structure of general the stationary stochastic processes, called $D(u_n)$ and $D'(u_n)$, which can be used prove the convergence of REPP to the Poisson process (see \cite[Section~5]{LLR83}). However, condition $D(u_n)$, which imposes some sort of uniform mixing is often too strong to be verified in a dynamical setting. Recently, Freitas et al \cite{FFT10} gave an alternative condition, named $D_3(u_n)$, which together with the original $D'(u_n)$ was enough to prove the convergence of the REPP $N_n$ in distribution to the standard Poisson process $N$. This is precisely the statement of \cite[Theorem~5]{FFT10}. The great advantage of this weaker condition $D_3(u_n)$ is that it is much easier to check in a dynamical setting.

We will show that the stochastic processes arising from the billiard systems considered satisfy both these conditions $D_3(u_n)$ and $D'(u_n)$. Hence, we give next the precise formulation of the two conditions.

 For every $A\in\RR$ we define
\[
M(A):=\max\{X_i:i\in A\cap {\mathbb Z}\}.
\]
In the particular case where $A=[0,n)$ we simply write, as before,
$M_n=M([0,n)).$ Also note that $\{M(A)\leq u_n\}=\{N_n(v_n^{-1}A)=0\}$.

\begin{condition}[$D_3(u_n)$]\label{cond:D3}
We say that $D_3(u_n)$ holds for the sequence $X_0,X_1,\ldots$ if there exists
 $\gamma(n,t)$ nonincreasing in $t$ for each $n$ and
$n\gamma(n,t_n)\to0$ as $n\rightarrow\infty$ for some sequence
$t_n=o(n)$ (which means that $t_n/n\to0$ as $n\to \infty$) so that
\[ \left|\p\left(\{X_0>u_n\}\cap
  \{M(A+t)\leq u_n\}\right)-\p(X_0>u_n)
  \p(M(A)\leq u_n)\right|\leq \gamma(n,t),
\]
for all $A\in\RR$ and $t\in\N$.
\end{condition}

This condition is a sort of mixing requirement specially adapted to the problem of counting exceedances. Using decay of correlations of the billiard systems considered we will verify it for the stochastic processes arising from such systems. 

\begin{condition}[$D'(u_n)$]\label{cond:D'} We say that $D'(u_n)$
holds for the sequence $X_0$, $X_1$, $X_2$, $\ldots$ if 
\begin{equation}
\label{eq:D'un}
\lim_{k\to\infty}\limsup_{n\rightarrow\infty}\,n\sum_{j=1}^{\lfloor n/k\rfloor}\p( X_0>u_n,X_j>u_n)=0.
\end{equation}
\end{condition}

While $D_3(u_n)$ is a condition on the long range dependence structure of the stochastic process 
$X_0, X_1,\ldots$, $D'(u_n)$ is rather a condition on the short range dependence structure which 
inhibits the appearance of clusters of exceedances.
In other words, if we break the first $n$ random variables into blocks of size $\lfloor n/k\rfloor$, 
then $D'(u_n)$ restricts the existence of more than one exceedance in each block, which means 
that the exceedances should appear scattered through the time line.

\subsection{Planar dispersing billiards.}

Let $\Gamma = \set{\Gamma_i, i = 1:k}$ be a family of pairwise disjoint, simply connected $C^3$ curves with strictly positive curvature
on the two-dimensional torus $\mathbb{T}^2$.  The billiard flow  $B_t$ is the dynamical system 
generated  by the motion of  a point particle in $Q= \mathbb{T}^2/(\cup_{i=1}^k (\mbox{ interior } \Gamma_i))$ with constant unit velocity inside $Q$
and with elastic reflections at $\partial Q=\cup_{i=1}^k \Gamma_i$, where elastic means ``angle of incidence equals angle of reflection''. 
If each $\Gamma_i$ is a circle then this system is called a periodic Lorentz gas, a well-studied model in physics. The billiard
flow is Hamiltonian and preserves  a probability measure (which is Liouville measure) $\tilde{\mu}$ given by
 $d\tilde{\mu}=C_{Q}\,dq\,dt$ where $C_{Q}$ is a normalizing constant and $q\in Q$, $t\in \R$ are Euclidean coordinates. 

We first consider the billiard map  $T: \partial Q \to \partial Q$.  Let $r$ be a one-dimensional coordinatization of 
$\Gamma$ corresponding to length and let $n(r)$ be the outward normal to $\Gamma$ at the point $r$. For each
$r\in \Gamma$ we consider the tangent
space at $r$ consisting of unit vectors $v$ such that $(n(r),v)\ge 0$. We identify each such unit vector $v$ with an 
angle $\theta \in [-\pi/2, \pi/2]$. The boundary $M$ is then parametrized by $M:=\partial Q=\Gamma\times [-\pi/2, \pi/2]$  so that $M$ consists of  the 
points $(r,\theta)$. $T:M\to M$ is the Poincar\'e map that gives the position and angle $T(r,\theta)=(r_1,\theta_1)$  after a point $(r,\theta)$
flows under $B_t$ and collides again with $M$, according to  the rule  angle of incidence equals angle of reflection. Thus if 
$ (r,\theta)$ is the time of flight before collision $T(r,\theta)=B_{h (r,\theta)} (r,\theta)$. 
The billiard map preserves a measure $d\mu=c_{M} \cos \theta\, dr d\theta$ equivalent to the 
$2$-dimensional Lebesgue measure $dm=dr\,d\theta$ with
density $\rho (x)=C_M\log\theta$ where $x=(r,\theta)$.

Under the assumption of finite  horizon condition, namely, 
that the time of flight $h(r,\theta)$  is bounded above, Young~\cite{Y98} proved that the  billiard map has exponential decay of correlations for H\"{o}lder observations. The strategy relied on building a Gibbs-Markov structure, that is now usually called \emph{Young Tower}, with a corresponding induced map bearing nice hyperbolic properties. Then the idea was to pass the good statistical properties of the induced map to the original system, in which the tail of the inducing time ended up playing a prominent role, in particular, in the determination of the system's mixing rates. This settled a long-standing question about the rate of decay of correlations in such systems.
Chernov~\cite{C99} extended this result to planar dispersing billiards with infinite horizon where $h(x,r)<\infty$ for 
all but finitely many points $(r,\theta)$ but is not essentially bounded.   Chernov also 
proved exponential decay for  dispersing billiards with corner points (a class of billiards we do not discuss in this paper). 
A good reference for background results for this section are the papers~\cite{BSC90,BSC91,Y98, C99}.

Our  first theorem is:

\begin{thm}\label{Poisson}
Let $T:M\to M$ be a  planar dispersing  billiard map.  Consider that the stochastic process $X_0, X_1,\ldots$ is given as in \eqref{eq:def-stat-stoch-proc-DS} for the type of observables $\varphi$ considered above.
Then for $\mu$ a.e. $\zeta$, conditions $D_3(u_n)$ and $D'(u_n)$ hold for $X_0, X_1,\ldots$ and sequences $(u_n)_{n\in\N}$ satisfying \eqref{eq:def-un}. Consequently, the REPP $N_n$ given in Definition~\ref{def:REPP} converges in distribution to the standard Poisson process.  
\end{thm}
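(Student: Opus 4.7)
The plan is to appeal to Theorem~5 of \cite{FFT10}, which asserts that the joint validity of conditions $D_3(u_n)$ and $D'(u_n)$ implies convergence in distribution of the REPP $N_n$ to the standard Poisson process. The proof therefore reduces to verifying $D_3(u_n)$ and $D'(u_n)$ for $\mu$-a.e.\ $\zeta$. The common obstacle to both is that the indicator $\I_{\{X_0>u_n\}}=\I_{U_n}$ of the shrinking ball $U_n=B(\zeta,r_n)$ is not H\"older, whereas the exponential decay of correlations available from \cite{Y98,C99} is stated for H\"older versus $L^\infty$ pairings. The standard remedy, which I would adopt, is to sandwich $\I_{U_n}$ between Lipschitz bumps $\phi_n^\pm$ supported on annular thickenings of width $\delta_n$, with Lipschitz constants of order $1/\delta_n$; the approximation error is $O(\delta_n)$ since $\mu$ is absolutely continuous with bounded density $c_M\cos\theta$.

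For $D_3(u_n)$, pairing $\phi_n^\pm$ against $\I_{\{M(A+t)\le u_n\}}$ and invoking the exponential decay yields a bound of order $\delta_n^{-1}e^{-\alpha t}+O(\delta_n)$. Choosing e.g.\ $\delta_n=e^{-\alpha t/2}$ and $t_n=(\log n)^2$ produces a function $\gamma(n,t_n)$ with $n\gamma(n,t_n)\to 0$, verifying the condition.

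The main obstacle is $D'(u_n)$. Following Collet's strategy \cite{C01}, I would split
\[
\sum_{j=1}^{\lfloor n/k\rfloor}\mu\bigl(U_n\cap T^{-j}U_n\bigr)=\sum_{j=1}^{p(n)}\mu\bigl(U_n\cap T^{-j}U_n\bigr)+\sum_{j=p(n)+1}^{\lfloor n/k\rfloor}\mu\bigl(U_n\cap T^{-j}U_n\bigr)
\]
at an intermediate scale $p(n)\asymp\log n$. The long-range tail is handled by the same Lipschitz approximation plus exponential decay, yielding $\mu(U_n\cap T^{-j}U_n)\le \mu(U_n)^2+\delta_n^{-1}e^{-\alpha j}$; after multiplication by $n$ the $\mu(U_n)^2$ contribution is $O(1/k)$ and the geometric tail is negligible provided $p(n)$ is large enough. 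The short-range sum is the genuine difficulty, for it measures how quickly the orbit of $\zeta$ can revisit its own neighbourhood. The proposed route is a Borel--Cantelli argument along a suitable sequence of thresholds showing that, for $\mu$-a.e.\ $\zeta$, the point $T^j\zeta$ remains at distance $\gg r_n$ from $\zeta$ for every $1\le j\le p(n)$, coupled with a geometric estimate controlling how $T^{-j}U_n$ can stretch across the singularity curves of $T$. This is precisely where the unbounded derivative of the billiard map and its singular set make the classical Collet scheme delicate; the natural tools are the homogeneity strips of Bunimovich--Sinai--Chernov together with Chernov's growth lemma, which bound the measure of the part of $U_n$ on which $T^{-j}$ loses uniform expansion. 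Once these pathological $\zeta$'s are discarded, the short-range sum is $o(1/n)$, which completes the verification of $D'(u_n)$ and hence of the theorem.
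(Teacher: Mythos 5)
Your overall plan matches the paper: reduce to Theorem~5 of \cite{FFT10}, verify $D_3(u_n)$ by Lipschitz regularisation plus decay of correlations, and split the $D'(u_n)$ sum at scale $\log^c n$ with the long-range tail controlled by decay of correlations and the short-range part by a Borel--Cantelli argument. However, there are two genuine gaps in the places where the real work is done.

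For $D_3(u_n)$ you write that ``pairing $\phi_n^\pm$ against $\I_{\{M(A+t)\le u_n\}}$ and invoking the exponential decay yields a bound of order $\delta_n^{-1}e^{-\alpha t}+O(\delta_n)$.'' This step is not available as stated. The decay of correlations that comes out of the Young Tower is Lipschitz against Lipschitz, or Lipschitz against $L^\infty$ only when the lift of the second function to the tower is \emph{constant on local stable leaves}. The set $\{M(A+t)\le u_n\}$ is a complicated intersection of pre-images of $U_n$, and its indicator $\Psi_A$ is certainly not Lipschitz and not constant on stable leaves, so neither version of the decay estimate applies directly. The paper's Lemma~\ref{lemma:dun-prelim} is precisely about repairing this: it replaces $\tilde\Psi_A$ on the tower by a stable-leaf-constant function $\overline\Psi_A$, and bounds $\nu(\tilde\Psi_A\neq\overline\Psi_A)$ by summing the estimate of Proposition~\ref{prop:annulus1}, which in turn uses the exponential contraction of stable manifolds together with Assumption A (the annulus estimate) to show that the set of points whose stable leaf crosses $\partial B(\zeta,r)$ at time $k$ has measure $O(\tau_1^k)$. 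Without this replacement and the accompanying ``waiting half the time'' trick (pairing $\Phi$ with $\Psi_{A+\lfloor j/2\rfloor}\circ T^{j-\lfloor j/2\rfloor}$), the decay estimate simply cannot be invoked.

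For the short-range part of $D'(u_n)$ your route --- establish via Borel--Cantelli that $T^j\zeta$ stays at distance $\gg r_n$ from $\zeta$ for $1\le j\le p(n)$, then control the stretching of $T^{-j}U_n$ across singularity curves --- faces a fundamental obstacle: the billiard map expands unboundedly, so knowing that $T^j\zeta$ is far from $\zeta$ gives you no direct control over $\mu(U_n\cap T^{-j}U_n)$, and ``Chernov's growth lemma'' (about fragmentation of unstable curves) is not the right tool to produce that control. The paper avoids this entirely by working where the dynamics is \emph{contracting}: it disintegrates $\mu$ over local unstable manifolds, uses that $T^{-k}$ uniformly contracts unstable leaves to show that $\{y\in W^u_{loc}(x):d(T^{-k}y,y)<\epsilon\}$ has diameter $O(\epsilon)$, integrates against the (uniformly comparable) conditional densities together with the Bunimovich--Sinai--Chernov bound on the measure of short homogeneous unstable leaves, and obtains the \emph{global} estimate $\mu(E_k)\le c_5 k^{-\sigma}$. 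The crucial step your proposal omits is then Collet's maximal function argument: one applies the Hardy--Littlewood maximal inequality to $\I_{E_l}\rho$ to pass from the global bound on $\mu(E_k)$ to a local bound $\mu(B(\zeta,k^{-\gamma/2})\cap E_{k^{\gamma/2}})\lesssim k^{-\gamma(1+\delta)/2}$ for $\mu$-a.e.\ $\zeta$ (via Borel--Cantelli on the sets $F_k$), which is exactly what is needed to estimate $\mu(U_n\cap T^{-j}U_n)$. A statement about the orbit of $\zeta$ alone is not a substitute for this measure-theoretic localisation.
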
 

\begin{remark}
In particular, note that for each $t> 0$ and each integer $k$ we have:
\[
\lim_{n\to \infty} \mu (N_n (t) =k)=e^{-t}\frac{t^k}{k!}.
\]
\end{remark}

The strategy for proving Theorem~\ref{Poisson} is to show the validity of conditions $D_3(u_n)$ and 
$D'(u_n)$ for various dynamical systems modelled by Young Towers, in particular dispersing planar
billiards. The proof of $D'(u_n)$ has been given in Gupta et al~\cite{GHN11} but we reproduce it for completeness in Section~\ref{sec:d'.sing}. 
The proof of $D_3 (u_n)$ is similar to that of the proof for a related condition $D_2 (u_n)$ (useful in establishing the existence of EVL)
given in~\cite{GHN11}.

\subsection{Billiards with polynomial mixing rates}

In \cite{Y98}, Young introduced a Gibbs-Markov structure (which became known as Young tower) which she used to study dispersing billiards with exponential decay of correlations. Later on, Markarian~\cite{M04} developed an elegant technique to use inducing to establish polynomial upper bounds for rates of decay of correlation in 
 certain billiard systems. Young~\cite{Y99}  had used coupling to establish polynomial decay for certain non-uniformly expanding maps and 
 Markarian's ideas built upon this work.
 
  Markarian's idea was to find  a subset $M\subset X$ on which the first return map $F: M\to M$ has strong hyperbolic behavior, in particular
 admits a Young Tower with exponential tails. His approach was subsequently extended  by Chernov
  and Zhang~\cite{CZ05} to many billiard systems exhibiting polynomial decay.

\textbf{Notation:}  Given a finite measure $\mu$ on $X$ and a measurable set $A\subset X$ ($\mu(A)>0$),
 then we denote by $\mu_A$ the corresponding conditional measure on $A$, i.e. 
 $\mu_A(B)=\mu(A\cap B)/\mu(A)$ for $B\subset X$ measurable.

  The first hitting time function is given by 
 \begin{equation}
 \label{eq:hitting-time}
 r_M(x):=\min \{ j\ge 1: T^j (x) \in M\}
 \end{equation}
 and measures the time until the orbit of a point $x\in X$ enters $M$. The induced map
 $F: M\circlearrowleft$ is then given by $F=T^{r_M}$ and its
 invariant measure is the normalised measure $\mu_M$.  
 If the return time tails decay polynomially, that is if $\mu (x\in X: r_M (x) > n) =\mathcal{O}(n^{-a})$
  for some constant $a>0$ then  Markarian~\cite{M04} showed that
  \begin{equation}
  \label{eq:polynomial-DC}
 \left|\int\phi \, \psi\circ T^n  d\mu -\int \phi d\mu \int \psi d\mu \right| \le C n^{-a} \|\phi\|_{Lip} \|\psi \|_{Lip}
 \end{equation}
 for some constant $C$. This allows us to extend our results above on Poisson limit laws to the 
 setting of billiards with polynomial mixing rates, by first inducing on $M$
 and then realizing $T: X\to X$ as a first-return time Tower over $(F,M,\mu_M)$. 

 \begin{thm}\label{poly}
  Suppose $(T,X,\mu)$  is a billiard system with SRB measure $\mu$  and $M\subset X$ is a subset such that the first return map $F: M \to M$ admits the structure of a 
 Young Tower with exponential tails. Suppose further that the function $\tau$, defined in \eqref{eq:hitting-time}, is integrable with respect to $\mu$. Consider now that the stochastic process $X_0, X_1,\ldots$ is given as in \eqref{eq:def-stat-stoch-proc-DS} for the type of observables $\varphi$ considered above.
Then for $\mu$ a.e. $\zeta$, the REPP $N_n$ given in Definition~\ref{def:REPP} converges in distribution to the standard Poisson process.  
\end{thm}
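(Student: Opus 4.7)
The plan is to prove Theorem~\ref{poly} by reducing it to Theorem~\ref{Poisson} via the inducing scheme. The strategy has two steps. First, I would apply the underlying argument of Theorem~\ref{Poisson} — verification of $D_3$ and $D'$ for systems modelled by a Young Tower with exponential tails — to the induced system $(F, M, \mu_M)$, obtaining a standard Poisson limit for the induced REPP at $\mu_M$-a.e.\ base point $\zeta_0\in M$. Second, I would prove a general lifting statement: convergence of the induced REPP to Poisson forces convergence of the original REPP to Poisson, using Kac's formula to match normalising constants and Birkhoff's ergodic theorem to match the time windows of the two systems.

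For the first step, the induced process is $Y_j=\varphi\circ F^j$. Since the proof of Theorem~\ref{Poisson} uses only the Young-Tower-with-exponential-tails structure to establish $D_3$ and $D'$, the same proof applies verbatim to $F$: for $\mu_M$-a.e.\ $\zeta_0\in M$ and any sequence $(w_m)$ satisfying $m\mu_M(Y_0>w_m)\to\tau$, the induced REPP converges in distribution to the standard Poisson process.

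For the lifting, fix first $\zeta\in M$ and take $u_n$ small enough that $U(u_n)\subset M$. Then $\mu(U(u_n))=\mu(M)\,\mu_M(U(u_n))$, hence the $F$-normalising constant is $v_n^F:=1/\mu_M(U(u_n))=\mu(M)\,v_n$. Given $x\in X$, let $r_1<r_2<\cdots$ be the successive visits of the $T$-orbit of $x$ to $M$ and set $x'=T^{r_1}(x)\in M$; since $U(u_n)\subset M$, the $T$-visits of $x$ to $U(u_n)$ are in bijection with the $F$-visits of $x'$ to $U(u_n)$, the $T$-time $r_\ell$ corresponding to the $F$-time $\ell$. By Birkhoff's ergodic theorem applied to $r_M$ under $(F,\mu_M)$, $r_\ell/\ell\to 1/\mu(M)$ $\mu$-a.e., so the $T$-window $[0,v_n t)$ matches the $F$-window $[0,v_n^F t)$ up to an $o(v_n^F)$ discrepancy, whose expected exceedance contribution is bounded by $o(v_n^F)\,\mu_M(U(u_n))=o(1)$. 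The counts therefore agree in distribution in the limit, and the joint convergence across disjoint intervals $J_1,\ldots,J_k$ follows by applying the same matching at each endpoint.

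To handle $\mu$-a.e.\ $\zeta\in X$, I would use the tower decomposition: for a.e.\ $\zeta$ there is a unique pair $(\zeta_0,k)$ with $\zeta_0\in M$, $0\le k<r_M(\zeta_0)$, and $\zeta=T^k\zeta_0$. The identity $\{T^ix\in U(u_n)\}=\{T^{i-k}x\in T^{-k}U(u_n)\}$ reduces the counting problem to hits of the set $T^{-k}U(u_n)$ near $\zeta_0$, which lies in $M$ for $n$ large and is comparable to a round ball of the same $\mu$-measure, so the preceding argument carries over. The main obstacle will be making the Birkhoff approximation quantitative enough that the count discrepancy between the matched $T$- and $F$-windows vanishes in probability uniformly over finitely many disjoint intervals; this is precisely where integrability of $r_M$ is used, to control both the matching error near the window endpoints and the contribution of orbits making unusually long excursions away from $M$ before returning.
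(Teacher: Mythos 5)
Your overall strategy matches the paper's: establish $D_3$ and $D'$ for the first-return map $F$ on $(M,\mu_M)$ using the same ingredients as for dispersing billiards (which the paper does check transfer to $F$), and then lift the induced Poisson limit to the original map $T$. The lifting is precisely Proposition~\ref{Extend} in the paper, and it is the technical heart of the argument. This is where your sketch has a genuine gap. The conclusion ``the counts therefore agree in distribution in the limit'' does not follow from the visit bijection and the ergodic theorem alone, for two reasons. First, $N_n$ and $\hat N_n$ are being compared under two different measures, $\mu$ on $X$ and $\mu_M$ on $M$, so one needs a mechanism to transfer distributional statements between them; the paper uses Kallenberg's criterion for simple point processes (reducing convergence to agreement of expectations and of void probabilities) together with a strong distributional convergence result of Zweim\"uller to replace $\mu$ by $\mu_M$ when computing void probabilities. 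Second, the Birkhoff approximation $r_\ell\approx \ell/\mu(M)$ is only almost everywhere with an $x$-dependent rate, so the expected count discrepancy cannot be bounded by $o(v_n^F)\,\mu_M(U(u_n))$ in one line; one must restrict to explicit good sets such as $\tilde G_N^\eps$ (where the Birkhoff average of $r_M$ is $\eps$-close to Kac's value) and $L_{N,n}^\eps$ (ensuring enough returns so the per-visit error is controlled), cut the count at a finite $K(J)$, and apply a union bound. You correctly flag this as ``the main obstacle'' but do not supply the mechanism, so the lifting step is incomplete as written.

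Your additional step for $\zeta\in X\setminus M$ via tower coordinates $(\zeta_0,k)$ is also not on solid footing: $T^{-k}U(u_n)$ is not a metric ball, need not lie in $M$, and need not have the annulus-regularity (Assumption A) that the proofs of $D_3$ and $D'$ use for balls, so the preceding argument does not carry over directly. Note that the paper's Proposition~\ref{Extend} as stated yields the conclusion only for $\mu$-a.e.\ $\zeta\in M$; the extension to generic points of $X$ requires more and is only addressed by reference to related work, so you are right that something is needed here, but the sketch you give does not close it.
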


The idea to prove Theorem~\ref{poly} is to use the same strategy for dispersing billiards to show that for the first return time map $F:M\to M$ and for the stochastic processes it gives rise to, we have convergence of the points processes $N_n$ to the standard Poisson process, $\mu$-a.e. Then we use an idea introduced in \cite{BSTV03}, which essentially says that the original system $T$ shares the same property of the first return time map $F$, meaning that for stochastic processes arising from the dynamics of $T$ we also have that the points processes $N_n$ converge to the standard Poisson process, for $\mu$-a.e. $\zeta$. Unfortunately, the original statement of \cite{BSTV03} only allows to conclude that if the first return time $F$ has exponential HTS/RTS for balls around $\mu$-a.e. $\zeta$ then the original system $T$ also has the same property. However, as remarked in \cite{BSTV03}, a small adjustment to the argument used there allows to prove the stronger statement that the same holds for the convergence of point processes to the standard Poisson process. For completeness, we state here such a result  and prove it in Section~\ref{sec:polynomial-rates}.

In order to distinguish objects of the induced system $F$ from the corresponding objects of the original system, we will use the symbol $\hat\cdot$ over these objects. In particular we will write $\hat\mu:=\mu_M$. Let $\zeta\in M$ and $\varphi$ be an observable as above, which achieves a global maximum at $\zeta$.

This new induced system gives rise to a new set of random variables
$$\hat X_n=\varphi\circ F^n.$$
 We can thus consider $\hat N_n(J)$ for $J\in \S$ and $\hat v=1/\hat \mu(\hat X_0>u_n)$ defined analogously to $N_n(J)$ in Definition~\ref{def:REPP} for the original system.

   \begin{prop}\label{Extend}
 Suppose $(T,X,\mu)$ is an dynamical system with $\mu$ absolutely continuous with respect to Lebesgue, $M\subset X$ is a measurable set with $\mu(M)>0$ and let $F:M\to M$ denote the  first return induced map. Assume that for $\hat N_n$ converges in distribution (w.r.t. $\hat \mu$) to a standard Poisson process $N$, for $\hat\mu$-a.e. $\zeta\in M$. 
Then for the original map
 $(T,X,\mu)$  we can say that $N_n$ converges in distribution (w.r.t.\ the measure $\mu$) to a standard Poisson process for $\mu$-a.e. point  $\zeta \in M$.
 \end{prop}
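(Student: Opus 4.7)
The plan is to realise the point process $N_n$ of the original map $T$ as a pathwise reindexing of the point process $\hat N_n$ of the return map $F$, with the correspondence between the two time scales supplied by Birkhoff's ergodic theorem. Let $S_k(x):=\sum_{i=0}^{k-1} r_M\circ F^i(x)$ denote the time of the $k$-th visit of $x\in M$ to $M$, so that $F^k(x)=T^{S_k(x)}(x)$. Fix $\zeta$ in the full $\hat\mu$-measure set on which $\hat N_n$ converges to the standard Poisson process; by absolute continuity of $\mu$ we may also assume $U(u_n)\subset M$ for all $n$ large. The identity $\hat\mu(\hat X_0>u_n)=\mu(X_0>u_n)/\mu(M)$ then gives the scaling $\hat v_n=\mu(M)\,v_n$. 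Moreover, for $x\in M$ every exceedance $X_j(x)>u_n$ forces $T^j(x)\in U(u_n)\subset M$, so $j=S_k(x)$ for a unique $k$, yielding the pathwise identities
\begin{equation*}
N_n(J)(x)=\#\{k\in\N_0:\,S_k(x)\in v_nJ,\;\hat X_k(x)>u_n\},\qquad
\hat N_n(J)(x)=\#\{k\in\N_0:\,k\in\hat v_nJ,\;\hat X_k(x)>u_n\}.
\end{equation*}
Both processes count exceedances of the same $F$-orbit but through two different time windows.

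Applying Birkhoff's theorem to $(F,M,\hat\mu)$ with the $\hat\mu$-integrable return time $r_M$ (Kac: $\int_M r_M\,d\hat\mu=1/\mu(M)$) gives $S_k(x)/k\to 1/\mu(M)$ for $\hat\mu$-a.e.\ $x\in M$. Hence, for every $\eps>0$, the symmetric difference between $\{k\in\N_0:S_k(x)\in v_nJ\}$ and $\hat v_nJ\cap\N_0$ is, with $\hat\mu$-probability tending to one, contained in the $\eps\hat v_n$-neighbourhood of $\hat v_n\partial J$. The expected number of $F$-exceedances inside such a neighbourhood is of order $\eps$ by the Poisson asymptotics of $\hat N_n$, so Markov's inequality yields $|N_n(J)(x)-\hat N_n(J)(x)|\to 0$ in $\hat\mu$-probability. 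Running the same argument coordinatewise on the finite-dimensional distributions $(N_n(J_1),\ldots,N_n(J_\ell))$ and combining with the hypothesis $\hat N_n\Rightarrow N$ produces $N_n\Rightarrow N$ in distribution with respect to $\hat\mu$.

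To pass from $\hat\mu$ to $\mu$, observe that by ergodicity of $T$ and $\mu(M)>0$ the hitting time $j_0(x):=\min\{j\ge 0:T^j(x)\in M\}$ is $\mu$-a.s.\ finite, so the pushforward $\nu$ of $\mu$ under $x\mapsto T^{j_0(x)}(x)$ is a measure on $M$ satisfying $\nu(A)\le\sum_k\mu(T^{-k}A)=\infty\cdot\mu(A)$ on any $\mu$-null set $A$, i.e.\ $\nu\ll\mu|_M\sim\hat\mu$. For such $x$ write $y=T^{j_0(x)}(x)$; since the $T$-trajectories of $x$ and $y$ coincide after $j_0(x)$ steps, $N_n(J)(x)$ differs from $N_n(J)(y)$ only by exceedances in two intervals of $T$-time of length $j_0(x)$, whose expected contribution is $\le 2j_0(x)/v_n\to 0$ for each fixed $x$. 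Combining with the $\nu$-a.e.\ (equivalently $\hat\mu$-a.e.) convergence of $N_n$ established above, the distributional limit transfers from $\hat\mu$ to $\mu$. The main obstacle is the middle step: because the Poisson limit $N(J)$ takes values of order one, the symmetric-difference error from the Birkhoff approximation $S_k\approx k/\mu(M)$ must be shown to contribute only $o(1)$ to the exceedance count, which forces the simultaneous use of the pointwise Birkhoff rate on a large set and of the Poisson tightness of $\hat N_n$ on the thin boundary neighbourhoods of $\hat v_n\partial J$.
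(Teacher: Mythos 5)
Your plan follows the same two-step scheme as the paper---Birkhoff time change on $M$, then a transfer of the distributional limit between the full phase space and the induced base---but in the opposite order, and the transfer step has a genuine gap.

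The middle step (for a fixed generic $\zeta$: relate $N_n(J)(x)$ and $\hat N_n(J)(x)$ via $S_k(x)=\sum_{i<k}r_M\circ F^i(x)$, use Birkhoff plus Kac to approximate $S_k\approx k/\mu(M)$, and control the discrepancy on a thin neighbourhood of $\hat v_n\partial J$ by tightness of $\hat N_n$) is essentially the same computation the paper does, and it correctly yields $\hat\mu(N_n(J)=0)\to\p(N(J)=0)$. The paper formalises the $\eps$-bookkeeping with the sets $\tilde G_N^\eps$, $L_{N,n}^\eps$ and a truncation $K$ on the number of exceedances; your version compresses this but the idea is right.

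The problem is the final step, where you try to pass from convergence under $\hat\mu$ to convergence under $\mu$ by pushing $\mu$ forward under $x\mapsto T^{j_0(x)}(x)$ onto a measure $\nu\ll\hat\mu$. Two things go wrong. First, ``$\nu$-a.e.\ convergence of $N_n$'' is not a thing here: what the middle step produces is convergence in distribution of $N_n$ when the initial point is sampled from $\hat\mu$, which is a statement about laws, not a pointwise statement that could be restated with respect to $\nu$. Second, and more fundamentally, $\nu\ll\hat\mu$ together with ``the law of $N_n(J)$ under $\hat\mu$ converges to Poisson'' does \emph{not} imply that the law of $N_n(J)$ under $\nu$ converges to Poisson; absolute continuity of initial measures preserves distributional limits only when the observables are asymptotically invariant (e.g.\ $W_n-W_n\circ T\to0$ in probability), and this extra ingredient has to be invoked and checked. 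That is exactly what the paper does: it first reduces the void-probability convergence from $\mu$ to $\mu_M=\hat\mu$ via Zweim\"uller's Corollary~6 in~\cite{Z07}, which legitimises the change of initial measure for the ergodic system $(T,\mu)$, and only then performs the Birkhoff time-change inside $\mu_M$. In your argument the Zweim\"uller-type step is silently replaced by the pushforward comparison, which does not do the job; a density like $d\nu/d\hat\mu\propto r_M\circ F^{-1}$ is unbounded and can in principle reweight the events $\{N_n(J)=0\}$ arbitrarily. Your argument would be repaired by explicitly proving (or citing) the asymptotic invariance $N_n(J)-N_n(J)\circ T\to0$ in $\mu$-probability and then applying~\cite[Cor.~6]{Z07}, at which point the pushforward construction becomes unnecessary.

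Two minor points: the paper works with Kallenberg's two criteria (expectations of $N_n(I)$ and void probabilities $\mu(N_n(J)=0)$), which is cleaner than chasing general finite-dimensional distributions; and the assumption $U(u_n)\subset M$ is used implicitly by the paper as well, but it is worth flagging that for an abstract measurable $M$ this needs $\zeta$ to be an interior point of $M$ (true in the billiard applications, not a priori for every $\zeta\in M$).
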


 We remark that the statement of \cite{BSTV03}, which said that the limit distribution for HTS/RTS for the induced map $F$  was equal, at $\mu$-a.e. point $\zeta$, to the respective HTS/RTS distributional limit for the original system $T$, was extended in~\cite{HWZ} by removing the $\mu$-a.e. point $\zeta$ restriction. In an ongoing work about an extremal dichotomy for intermittent maps, the first named author with A.C.M. Freitas, M. Todd and S. Vaienti proved an extension of the~\cite{HWZ} result to include the convergence of point processes, which implies Proposition~\ref{Extend}.

\section{Condition $D_3 (u_n)$ for Young Towers with exponential tails}

We will make an assumption on the invariant measure $\mu$, which is automatically satisfied for planar  billiard maps. We assume,

\noindent {\bf Assumption A~\label{annulus}:} 
 For $\mu$-a.e.  $\zeta\in M $ there exists $\xi:=\xi(\zeta)>0$ such that if 
 $A_{r,\epsilon}(\zeta)=\{ y \in M : r\le d(\zeta,y) \le r+\epsilon  \}$ is a shell   of inner  radius $r$
 and outer radius  $r+ \epsilon$  about  the point $\zeta$  and  if  $r$ sufficiently small,  $0<\epsilon\ll r<1$,  
 then $\mu(A_{r,\epsilon} (\zeta)) \le \epsilon^\xi$.

 \vspace{2mm}
 
 Assumption A is satisfied by planar dispersing billiards with finite and infinite horizon as the 
 invariant measure is equivalent to Lebesgue.  This is proved in~\cite[Appendix 2]{BSC91} 
 where it is shown that $\xi$ may be taken as $1$ in the case of 
finite horizon and $4/5$ in the case of infinite horizon.

 The Young Tower
assumption implies that there exists a subset $\Lambda\subset M$ such that $\Lambda$ has a  
hyperbolic product structure and that (P1)--(P4) of~\cite{Y98} hold. We refer the reader to Young's paper~\cite{Y98}
and the book by Baladi~\cite{B00} for details. A similar axiomatic construction of a tower is given by Chernov~\cite{C99}
which is  a good reference for background on  dispersing billiard maps and flows. 

By taking $T$ to be a local diffeomorphism we allow the map $T$  or its derivative 
to have discontinuities or  singularities. 

Next  we describe briefly the structure of  a Young Tower with exponential return time  tails for  a local diffeomorphism $T:M\to M$
of a  Riemannian   manifold  $M$ equipped with Lebesgue measure $m$.  
 
There is a set $\Lambda$ with a hyperbolic product structure as in Young~\cite{Y98}
and assume there is an $\mathscr{L}^1(m)$ return time function $R:\Delta_0\to\mathbb{N}$.
Moreover assume there is a countable partition $\Lambda_{0,i}$ of $\Delta_0$ so that 
$R$ is constant on each partition element $\Lambda_{0,i}$. We put $R_i:=R|_{\Lambda_{0, i}}$.
Now  the  Young Tower is defined by
\[
\Delta = \cup_{i, l \le R_i-1}\{ (x, l) : x\in \Lambda_{0,i }\}
\] 
and the  tower map $F:\Delta \to \Delta $ by
\[
F(x, l) = \begin{cases}(x, l+1) & \mbox{ if }  x\in \Lambda_{0,i}, l<R_i-1\\
(T^{R_i}x, 0) &\mbox{ if } x\in \Lambda_{0,i}, l = R_i-1\end{cases}.
\]
We will refer to $\Delta_0:= \cup_i(\Lambda_{0,i} ,0)$ as  the  base of the tower $\Delta$  and denote 
 $\Lambda_i := \Lambda_{0,i}$. Similarly we call $\Delta_l=\{(x,l): l<R(x) \}$, the $l$th level of the tower. 
Define the return map $f=T^R:\Delta_0\to\Delta_0$ by $f(x)=T^{R(x)} (x)$. We may form  a quotiented tower
(see~\cite{Y98} for details) by introducing an equivalence relation for points on the same stable manifold. We now  list the features 
of the Tower that we will use.

There exists an invariant measure $m_0$  for $f: \Delta_0\to \Delta_0$ which has 
absolutely continuous conditional measures on local unstable manifolds in $\Delta_0$, with density bounded uniformly from above and below.

There exists an $F$-invariant measure $\nu$ on $\Delta$ which is given by
$\nu (B)=\frac{m_0(F^{-l} B)}{\int_{\Lambda_0} R\,dm_0 }$ 
 for measurable $B\subset \Lambda_{l}$, and extended to the entire tower $\Delta$ in the obvious way.
There is a projection $\pi:\Delta \to M$ given by $\pi(x, l) = T^l(x)$ which semi-conjugates $F$ and $T$, 
that is it satisfies $\pi\circ F = T\circ\pi$. 
The invariant measure  $\mu$, which is an SRB measure for $T: M\to M$,  is then given by $\mu = \pi_*\nu$.
Denote by $W^s_{loc} (x)$ the local stable manifold through $x$ i.e.\
there exists $\epsilon(x)>0$ and $C>0$, $0<\alpha<1$ such that 
$$
W^s_{loc}= \{ y: d(x,y)<\epsilon, d(T^n y,T^n x)< C \alpha^n\;\forall n\ge 0\}.
$$  
 We use  the notation $W^s_{loc}$ rather than $W^s_{\epsilon} (x)$ in  contexts where  the length of the local stable manifold is not   important. Analogously one defines the local unstable manifold $W^u_{loc} (x)$.
Let $B(x,r)$  denote the ball of radius $r$ centered at the point $x$. 
 We lift a function $\phi: M \to \R$ to $\Delta$ by defining, with abuse of notation, $\phi(x,l)=\phi(T^l x)$.

Under the assumption of exponential tails, that is if $m(R>n)=\mathcal{O}( \theta^n)$ for some $0<\theta <1$ then from the computations in \cite{Y98} one can deduce that for all Lipschitz $\phi$, $\psi$ we have
 \begin{equation}\label{Lip_Lip_decay}
 \left |\int \phi  \psi\circ T^n  d\mu -\int \phi d\mu \int \psi d\mu \right| \le C \theta_1^n \|\phi\|_{Lip} \|\psi \|_{Lip}
 \end{equation}
 for some constant $C$. Moreover, if  the lift of $\psi$ is constant on local stable leaves of the Young Tower,
 then
 \begin{equation}\label{Lip_infinity_decay}
 \left|\int \phi  \psi\circ T^n  \,d\mu -\int \phi \,d\mu \int \psi \,d\mu \right| 
 \le C \theta_1^n \|\phi\|_{Lip} \|\psi \|_{\infty}.
 \end{equation}

\vspace{3mm}

As before, let $\zeta$  be in the support of $\mu$
 and define a stochastic process $X_n$ given by $X_n(x) = -\log d(T^n x, \zeta)$. 
In the remainder of this section we establish condition $D_3(u_n)$ for maps modeled by  a 
Young Tower with exponential tails satisfying Assumption A.  Our main theorem for this section is:

\begin{thm}\label{thm:dun}
Let $T:(M,\mu) \to (M,\mu) $ be a dynamical system  modeled by a Young Tower
with exponential tails satisfying Assumption A.
Then the stochastic process $X_0, X_1,\ldots$ defined as in \eqref{eq:def-stat-stoch-proc-DS} 
satisfies the condition $D_3(u_n)$. 
\end{thm}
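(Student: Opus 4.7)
The plan is to argue in the same spirit as the proof of the closely related condition $D_2(u_n)$ given for Young towers in~\cite{GHN11}, extending from intervals $A=[0,\ell)$ to arbitrary $A\in\RR$ with all estimates uniform in~$A$. Write $r_n:=\e^{-u_n}$, $U_n:=U(u_n)=B(\zeta,r_n)$, $\phi:=\I_{U_n}$ and $\psi:=\I_{\{M(A)\leq u_n\}}$. Since $\{M(A+t)\leq u_n\}=T^{-t}\{M(A)\leq u_n\}$, $D_3(u_n)$ reduces to bounding
$$\Xi(A,t):=\int\phi\,(\psi\circ T^t)\,d\mu-\int\phi\,d\mu\int\psi\,d\mu$$
by a $\gamma(n,t)$ with $n\gamma(n,t_n)\to 0$ for some $t_n=o(n)$.

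The first step replaces the indicator $\phi$ by a Lipschitz approximation. For $\varepsilon>0$ pick $\tilde\phi_\varepsilon\colon M\to[0,1]$ equal to $1$ on $B(\zeta,r_n)$, vanishing off $B(\zeta,r_n+\varepsilon)$, with $\|\tilde\phi_\varepsilon\|_{\mathrm{Lip}}\leq 1/\varepsilon$; by Assumption A, $\|\tilde\phi_\varepsilon-\phi\|_{L^1(\mu)}\leq\mu(A_{r_n,\varepsilon}(\zeta))\leq\varepsilon^{\xi}$. The second step replaces $\psi$ by a function $\tilde\psi$ bounded by~$1$ and constant on local stable manifolds of $T$, so that the $L^\infty$ form \eqref{Lip_infinity_decay} of decay of correlations is available. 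Two points on the same local stable manifold $W^s_{\mathrm{loc}}$ can give different $\psi$-values only if, for some $j\in A\cap\N_0$, one of the images $T^j\cdot$ falls within distance $c\alpha^j\,\mathrm{diam}(W^s_{\mathrm{loc}})$ of $\partial U_n$, where $\alpha<1$ is the stable contraction rate. By $T$-invariance of $\mu$ and Assumption A, the measure of this bad set is controlled by $\sum_{j\geq 0}\mu(A_{r_n,c\alpha^j\delta}(\zeta))$, a convergent geometric series in $j$ whose sum is small and governed only by the stable-manifold diameter $\delta$ (which can be shrunk by passing to a higher iterate of the tower base map).

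The third step applies \eqref{Lip_infinity_decay} to $\tilde\phi_\varepsilon$ and $\tilde\psi$:
$$\left|\int\tilde\phi_\varepsilon(\tilde\psi\circ T^t)\,d\mu-\int\tilde\phi_\varepsilon\,d\mu\int\tilde\psi\,d\mu\right|\leq C\theta_1^{t}\|\tilde\phi_\varepsilon\|_{\mathrm{Lip}}\|\tilde\psi\|_\infty\leq C\theta_1^{t}/\varepsilon.$$
Combining this with the approximation errors of the first two steps gives $|\Xi(A,t)|\leq C(\varepsilon^\xi+\theta_1^{t}/\varepsilon+o(1)_\delta)$. Balancing $\varepsilon:=\theta_1^{t/(1+\xi)}$ produces $\gamma(n,t)=C\theta_1^{\xi t/(1+\xi)}$, which is nonincreasing in $t$ and independent of~$n$. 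Setting $t_n:=\lceil(\log n)^2\rceil$ gives $t_n=o(n)$ and $n\gamma(n,t_n)\to 0$, establishing $D_3(u_n)$.

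The main obstacle is the second step. Because $|A|$ can grow with $n$ (in the applications, typically $|A|\lesssim n$), uniformity in $|A|$ of the cost of regularizing $\psi$ along stable manifolds is essential. The saving geometric input is the uniform exponential contraction along local stable manifolds on the Young tower, which, when fed into the union bound over the times $j\in A$, yields a convergent geometric series rather than a sum that grows with~$|A|$.
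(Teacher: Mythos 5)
Your decomposition into three errors (Lipschitz‐approximation of $\I_{U_n}$, stable‐leaf regularization of $\I_{\{M(A)\leq u_n\}}$, decay of correlations) matches the paper's structure, but the second error term is not actually small, and this is precisely the obstacle your last paragraph identifies without resolving. If $\tilde\psi$ is required to be constant on local stable manifolds so that \eqref{Lip_infinity_decay} applies, then $\psi(x)\neq\tilde\psi(x)$ can already occur when the time index $j\in A$ equals $a_1$ (possibly $a_1=0$): in that case the relevant annulus has width $\sim\delta$, not $\sim\alpha^j\delta$, and the leading term of your series $\sum_{j\geq a_1}\mu\bigl(A_{r_n,c\alpha^j\delta}(\zeta)\bigr)$ is of order $\delta^\xi$ --- a fixed positive constant independent of $n$ and of $t$. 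Your candidate $\gamma(n,t)=C\theta_1^{\xi t/(1+\xi)}$ silently discards this $O(\delta^\xi)$ term, but $D_3(u_n)$ requires $n\gamma(n,t_n)\to0$, which a bound stuck at $\delta^\xi>0$ cannot deliver. The remark that $\delta$ ``can be shrunk by passing to a higher iterate of the tower base map'' does not repair this: the tower and its $\delta$ are fixed once and for all, and any $n$‐dependent re‐induction would destroy the uniformity you rely on in step 3.

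The device the paper uses, and that your argument is missing, is to split the gap $t$ in half before regularizing. One does not regularize $\Psi_A$ directly; one writes $\Psi_A\circ T^t=\Psi_{A+\lfloor t/2\rfloor}\circ T^{\,t-\lfloor t/2\rfloor}$ and regularizes $\Psi_{A+\lfloor t/2\rfloor}$ along stable leaves. Because $\Psi_{A+\lfloor t/2\rfloor}$ only depends on iterates $T^j$ with $j\geq\lfloor t/2\rfloor$, the relevant stable‐leaf images have already contracted to length $\lesssim\alpha^{\lfloor t/2\rfloor}$, and the bad set is contained in $\bigcup_{k\geq\lfloor t/2\rfloor}T^{-k}B_{u_n,k}$, whose measure is $\leq\sum_{k\geq\lfloor t/2\rfloor}C\tau_1^k\leq C\tau_1^{\lfloor t/2\rfloor}$ by Proposition~\ref{prop:annulus1}. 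Decay of correlations is then applied with the remaining gap $t-\lfloor t/2\rfloor\approx t/2$, giving the total estimate $\gamma(n,t)=C\bigl(n^{-2}+n^{2/\xi}\theta_1^{\lfloor t/2\rfloor}\bigr)$, all of whose terms genuinely decay. Your steps 1 and 3 are sound, and the observation that $\{M(A+t)\leq u_n\}=T^{-t}\{M(A)\leq u_n\}$ is exactly the right starting point; the fix is to commute only half of $T^{-t}$ through before introducing the stable‐leaf surrogate.
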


\begin{proof}
 We first define 
 \[
 B_{r,k}(\zeta) = \left\{x:T^{k}(W^s_\eta(x))\cap \partial B (\zeta, r)\ne\emptyset\right\}.
 \]
and obtain as an immediate consequence of Assumption A the following:

\begin{prop}\label{prop:annulus1}
Under Assumption A there exist constants $C>0$ and $0<\tau_1<1$ such that for any $r, k$
 \begin{equation}\label{annulus}
  \mu(B_{r, k}(\zeta))\le  C \tau_1^{k}.
 \end{equation}
\end{prop}

\begin{proof}
 As a consequence of uniform contraction of local stable manifolds~\cite[(P2)]{Y98}, 
there exist $\alpha\in(0, 1)$ and $c_1>0$ such that $d(T^n(x), T^n(y))\le c_1\alpha^n$ for all
 $y\in W^s_\eta(x).$ In particular, this implies that $|T^k(W_\eta^s(x))|\le c_1\alpha^k$ where $|\cdot|$ denotes the length with respect to the Lebesgue measure. Therefore,  for every  $x\in B_{r,k}(\zeta)$  
 the leaf $T^k(W^s_\eta(x))$ lies in an annulus of width $2c_1\alpha^k$ around
 $\partial B(\zeta,r)$.  By Assumption A and invariance of $\mu$ the result follows.
\end{proof}

We now continue the proof of Theorem~\ref{thm:dun}. The constant $\tau_1$ below is from 
Proposition~\ref{prop:annulus1}.
Let $A\in S$, so that $A=\cup_{j=1}^{l}[a_j,b_j)$ and define $I_A=[a_1,b_l]$.

\begin{lemma}\label{lemma:dun-prelim}
Suppose
 $\Phi:M\to \R$ is Lipschitz and $\Psi_{A}$ is the indicator function 
 \[
\Psi_{A}:= \I_{\{M(A)\le u_n\}}
 \]
Then for all $j\geq 0$

\begin{equation}
 \left|\int\Phi\Psi_{A}\circ T^j \text{d}\mu - \int\Phi\text{d}\mu\int\Psi_{A}\text{d}\mu\right|\le \C\left(\|\Phi\|_\infty \tau_1^{\floor{j/2}}+\|\Phi\|_{\text{Lip}}\theta^{\floor{j/2}}\right).
\end{equation}

\end{lemma}

\begin{proof}
Define the function $\tilde\Phi:\Delta\to\mathbb{R}$ by $\tilde\Phi(x,r) = \Phi(T^r(x))$ 
and the function $\tilde \Psi_{A}(x,r) = \Psi_{A}(T^r(x)).$ 
We choose a reference unstable manifold $\tilde\gamma^u\subset \Delta_0$ and by the hyperbolic product 
structure each local stable manifold $W^s_\eta(x)$ will intersect $\tilde\gamma^u$ 
in a unique point $\hat{x}$.
Here $x$ denotes a point in the base of the tower $\Delta_0$ and we therefore have  
$x\in W^s_\eta(\hat x).$

We define the function $\overline\Psi_{A}(x,r):=\Psi_{A}(\hat x, r)$. 
We note that $\overline \Psi_{A}$ is constant along stable manifolds in $\Delta$ and the set of points 
where $\overline \Psi_{A}\neq\tilde\Psi_{A}$ is, by definition, the set of $(x,r)$  which project to points $T^r (x)$ for which there 
exist $x_1,x_2$ on the same local stable manifold as $T^r (x)$ for which 
\[
x_1\in \{M(A)\le u_n\}
\]
 but 
\[
x_2 \notin \{M(A)\le u_n \}
\]
This set is contained inside $\cup_{k =a_1}^{a_1+b_l}T^{-k}B_{u_n,k}$. If
we let $a_1\ge\floor{j/2}$  then by Proposition~\ref{prop:annulus1}
we  have
\[\nu\set{\tilde\Psi_{\floor{j/2},\floor{j/2}+b_l}\neq\overline\Psi_{\floor{j/2},\floor{j/2}+b_l}} \le 
\sum_{k=\floor{j/2}}^{\infty}\mu(B_{u_n, k})\le \C \tau_1^{\floor{j/2}}.\] 
By the decay of correlations as proved in~\cite{Y98} under the assumption of exponential tails, we have
\[
 \abs{\int\tilde\Phi\overline\Psi_{A+\floor{j/2}}\circ F^{j -\floor{j/2}}d\nu - \int\tilde\Phi d\nu\int \overline\Psi_{A+\floor{j/2}}d\nu}\le \C\|\Phi\|_{\text{Lip}}\|\Psi\|_{\infty}\theta^{\floor{j/2}}.
\] 
Recall,
\[
\abs{\int \Phi\Psi_{A+\floor{j/2}}\circ T^{j -\floor{j/2}}d\mu - \int\Phi d\nu\int \Psi_{A+\floor{j/2}}d\mu}
= \abs{\int\tilde\Phi\tilde\Psi_{A+\floor{j/2}}\circ F^{j -\floor{j/2}}d\nu - \int\tilde\Phi d\nu\int \tilde\Psi_{A+\floor{j/2}}d\nu}
\]
We will use the identity $\int \tilde{\phi}\tilde{\psi}\circ F-\int \tilde{\phi} \int \tilde{\psi}= \int \tilde{\phi} (\tilde{\psi}\circ F-\bar{\psi}\circ F)+
\int \tilde{\phi}\bar{\psi}\circ F -\int \tilde{\phi} \int \bar{\psi} +\int \tilde{\phi} \int \bar{\psi} -\int \tilde{\phi} \int \tilde{\psi}$.
 Thus 
\begin{align}
 \abs{\int \Phi\Psi_{A+\floor{j/2}}\circ T^{j -\floor{j/2}}d\mu - \int\Phi d\nu\int \Psi_{A+\floor{j/2}}d\mu}\nonumber\\
= \abs{\int\tilde\Phi\tilde\Psi_{A+\floor{j/2}}\circ F^{j -\floor{j/2}}d\nu - \int\tilde\Phi d\nu\int \tilde\Psi_{A+\floor{j/2}}d\nu}\nonumber\\
\le \abs{\int \tilde\Phi\left(\tilde\Psi_{A+\floor{j/2}} - \overline\Psi_{A+\floor{j/2}}\right)\circ F^{j - \floor{j/2}}d\nu}  +\C \|\Phi\|_{\text{Lip}}\theta^{\floor{j/2}} \nonumber\\ 
+ \abs{\int\tilde\Phi d\nu\int\left(\overline\Psi_{A+\floor{j/2}} - \tilde\Psi_{A+\floor{j/2}}\right)
d\nu}\nonumber\\
\le \C\left(2\|\Phi\|_\infty\nu\set{\overline\Psi_{A+\floor{j/2}}\neq\tilde\Psi_{A+\floor{j/2}}}+\|\Phi\|_{\text{Lip}}\theta^{\floor{j/2}}\right)\nonumber\\
\le \C\left(\|\Phi\|_\infty  \tau_1^{\floor{j/2}}+\|\Phi\|_{\text{Lip}}\theta^{\floor{j/2}}\right).
\end{align}
We complete the proof by observing that $\int\Psi_{A} d\mu = \int\Psi_{A+\floor{j/2}} d\mu$ by the $\mu$ invariance of $T$ and that
 $\Psi_{A+\floor{j/2}}\circ T^{j -\floor{j/2}} = \Psi_{A+j} = \Psi_{A}\circ T^{j}.$
\end{proof}

To prove condition $D_3(u_n)$, we will approximate the characteristic function of  the set 
$U_n=\{ X_0>  u_n\} $ by a suitable Lipschitz function. This approximation will decrease sharply to zero near the boundary of  the set $U_n$.  The bound in Lemma~\ref{lemma:dun-prelim} involves the Lipschitz norm, therefore, we need to be able to bound the increase in this norm.

 We approximate the indicator function $\I_{U_n}$ by a Lipschitz continuous function $\Phi_n$ as follows. 
 Since $U_n$ is a ball of some radius $r_n \sim \frac{1}{\sqrt{n}}$ centered at the point $\zeta$,
  we define $\Phi_n$ to be $1$ inside a ball centered at $\zeta$ of radius $r_n-n^{-\frac{2}{\xi}}$, where $\xi$ comes from Assumption A and decaying to $0$ so that on the boundary of
  $U_n$, $\Phi_n$ vanishes. The Lipschitz norm of $\Phi_n$ is seen to be bounded by $n^{\frac{2}{\xi}}$
  and $\|\I_{U_n}- \Phi_n \|_1\le \frac{1}{n^2}$. Therefore
\begin{align}
 \abs{\int \I_{U_n} \Psi_{A+\floor{j/2}}\circ T^{j -\floor{j/2}}d\mu - \mu(U_n)\int \Psi_{A+l}d\mu}\nonumber\\
\le \abs{\int \left(\I_{U_n} - \Phi_n\right)\Psi_{A+\floor{j/2}}d\mu}+\C \left(\|\Phi_n \|_\infty j^2\tau_1^{\floor{j/4}}+\|\Phi_n \|_{\text{Lip}}\theta^{\floor{j/2}}\right)\nonumber\\
+ \abs{\int \left(\I_{U_n} - \Phi_n \right)d\mu \int \Psi_{A+\floor{j/2}}d\mu}, 
\end{align}
and consequently
\[
 \abs{\mu(U_n\cap \{ M(A+l)\le  u_n\} ) - \mu(U_n)\mu(\{M(A)\le  u_n \} )}\le \gamma(n,j)
\] where 
\[
 \gamma(n,j) = \C\left(n^{-2}+ n^{\frac{2}{\xi}} \theta_1^{\floor{j/2}}\right)
\] where $\theta_1 = \max\set{\tau_1, \theta}. $
Let $j=t_n= (\log n)^{5}$. Then  $n\gamma(n, t_n)\to 0$ as $n\to\infty.$  Note that we had considerable 
freedom of choice of $t_n$, anticipating our applications we choose $t_n=(\log n)^{5}$.
\end{proof}

\subsection{Property $D'(u_n)$ for Planar Dispersing  Billiard Maps}\label{sec:d'.sing}
We have shown  $D_3 (u_n)$ is immediate in  the case of dispersing billiard maps with finite horizon, as they are modeled
by a Young Tower in~\cite{Y98} and  have exponentially decaying correlations.  Chernov~\cite[Section 5]{C99} (see also~\cite[Section 5]{BSC91}) constructs a Young Tower  for  billiards with infinite horizon to prove exponential decay of correlations so that condition  $D_3(u_n)$ is satisfied by this class of billiard map as well.  Hence to prove a Poisson limit law we need only prove  condition $D'(u_n)$, which we do in  this section.

It is known (see~\cite[Lemma 7.1]{C99} for finite horizon and ~\cite[Section 8]{C99} for infinite horizon) 
that dispersing billiard maps expand in the unstable direction 
 in the Euclidean metric $|\cdot|=\sqrt{(dr)^2+(d\phi)^2}$, in that  $|DT_u^n v|\ge C \tilde{\lambda}^n |v|$
for some constant $C>0$ and $\tilde{\lambda}>1$  independent of $v$. 

If we choose $N_0$ so that $\lambda:= C\tilde{ \lambda}^{N_0}>1$ then $T^{N_0} $ (or $DT^{N_0}$) 
expands unstable manifolds (tangent vectors to unstable manifolds) uniformly in the Euclidean
metric.

It is common to use the $p$-metric in proving ergodic properties of   billiards. 
Recall that
for any  curve $\gamma$, the $p$-norm of a tangent vector to $\gamma$ is given as $|v|_p =\cos\phi(r)|dr|$ where $\gamma$ is parametrized in the $(r, \phi)$ plane as $(r, \phi(r)).$ Since the Euclidean metric in the
 $(r, \phi)$ plane is given by $ds^2 = dr^2+d\phi^2$ this implies that $|v|_p\le \cos\phi(r)\,ds\le ds = |v|$.  We will use $l_p (C)$ to denote the length of a 
curve in  the $p$-metric and $l(C)$ to denote length in the Euclidean metric.  
If $\gamma$ is a local unstable manifold or local stable manifold then  
$C_1 l (\gamma )_p \le l(\gamma) \le C_2 \sqrt{ l_p (\gamma ) }$.

For planar dispersing  billiards
there exists an invariant measure $\mu$ (which is equivalent to the 2-dimensional Lebesgue measure)
and through $\mu$-a.e. point $x$ there exists a local stable manifold $W_{loc}^s (x)$ and a local 
unstable manifold $W_{loc}^u (x)$.  The SRB measure $\mu$ has absolutely continuous (with respect to Lebesgue measure) conditional measures $\mu_x$ on each $W_{loc}^u (x)$. 
The expansion by $DT$ is unbounded however in the $p$-metric
at $\cos\theta=0$ and this may lead to  quite different expansion rates at different points on $W_{loc}^u (x)$. To overcome this
effect and obtain uniform estimates on the densities of conditional SRB measure it is common to definite homogeneous local
unstable and local stable manifolds. This approach was adopted in~\cite{BSC90,BSC91,C99,Y98}. Fix a large $k_0$ and 
define for $k>k_0$
\[
I_k=\left\{(r,\theta): \frac{\pi}{2} -k^{-2} <\theta<\frac{\pi}{2}-(k+1)^{-2} \right\}
\]
\[
I_{-k}=\left\{(r,\theta): -\frac{\pi}{2} +(k+1)^{-2} <\theta<-\frac{\pi}{2}+k^{-2}\right \}
\]
and 
\[
I_{k_0}=\left\{(r,\theta): -\frac{\pi}{2} +k_0^{-2} <\theta<\frac{\pi}{2}- k_0^{-2} \right\}.
\]
We call a local unstable (stable) manifold $W^u_{loc} (x)$, ($W^s_{loc} (x)$) 
{\em homogeneous} if  $T^n  W^u_{loc} (x)$  ($T^{-n}  W^s_{loc} (x)$) does not intersect any of the line segments in $\cup_{k>k_0} (I_k\cup I_{-k})\cup I_{k_0}$ for all $n\ge 0$. Homogeneous
$W^u_{loc} (x)$ have almost constant  conditional SRB densities $\frac{d\mu_x}{dm_x}$ in the sense that 
there exists $C>0$ such that $\frac{1}{C} \le \frac{d\mu_x}{dm_x}(z_1) /\frac{d\mu_x}{dm_x}(z_2) \le C$ for all $z_1,~z_2 \in W^u_{loc} (x)$ (see ~\cite[Section 2]{C99} and the remarks following Theorem 3.1).

From this point on all the local unstable (stable) manifolds that we consider will be homogeneous. 
Bunimovich et al~\cite[Appendix 2, Equation A2.1]{BSC91}  give quantitative estimates on the 
length of  homogeneous  $W^u_{loc} (x)$. They  show  there exists 
$C,~\tau >0$ such that  
$\mu \{ x: l(W_{loc}^s (x))<\epsilon \mbox{ or }  l(W_{loc}^u (x))<\epsilon \} \le C \epsilon^{\tau}$ 
where   $l(C)$ denotes 1-dimensional  Lebesgue measure or length of a rectifiable curve $C$.  
In our setting $\tau$ could be taken to be  $\frac{2}{9}$, its exact value will play no role but for 
simplicity in the forthcoming estimates  we assume $0<\tau<\frac{1}{2}$.

The natural measure $\mu$
has absolutely continuous conditional measures $\mu_x$ on local unstable manifolds $W_{loc}^u (x)$ which have almost uniform
densities with respect to Lebesgue measure on $W_{loc}^u (x)$ by~\cite[Equation 2.4]{C99}.

\subsubsection{Controlling the measure of the set of rapidly returning points.}
 Let $A_{\sqrt{\epsilon}}=\{ x: |W_{loc}^u (x)|>\sqrt{\epsilon}\}$ then 
 $\mu (A^c_{\sqrt{\epsilon}})< c_1\epsilon^{\tau/2}$ by Bunimovich's result.
  Let $x\in A_{\sqrt{\epsilon}}$
and consider $W_{loc}^u (x)$. Since $|T^{-k} W_{loc}^u (x)|<\lambda^{-1}  |W_{loc}^u (x)|$ for $k>N_0$
we obtain by the triangle inequality for $y,y'\in W_{loc}^u (x)$:
$$
d(y,y')\le d(T^{-k}y',y')+d(T^{-k}y,T^{-k}y')+d(T^{-k}y,y)
\le 2\epsilon+\frac1\lambda d(y,y')
$$
which implies $d(y,y')\le2(1-\frac1\lambda)\epsilon$. 
Thus
$$
l\{y\in  W_{loc}^u (x): d(y,T^{-k} y)<\epsilon\}\le 2(1-\lambda^{-1})\epsilon
\le c_2\sqrt{\epsilon}\, l\{y\in  W_{loc}^u (x)\}.
$$
Since the density of the conditional SRB-measure  $\mu_x$ is bounded above and below with respect
 to one-dimensional Lebesgue measure we obtain 
 $\mu_x (y\in  W_{loc}^u (x): d(y,T^{-k} y)<\epsilon)< c_3\sqrt{\epsilon}$. Integrating over all unstable manifolds in $A_{\sqrt{\epsilon}}$ (throwing away
the set $\mu (A^c_{\sqrt{\epsilon}})$)
we obtain $\mu \{ x: d(T^{-k}x, x)<\epsilon) <c_4 \epsilon^{\tau/2}$ ($c_4\le c_1+c_3$).
 Since $\mu$ is $T$-invariant we get
$$
\mathcal{E}_{k} (\epsilon):= \mu \{ x: d(T^k x, x)<\epsilon \} < c_4 \epsilon^{\tau/2}
$$
for $k>N_0$. 
Consequently
\[
E_k:= \{x: d(T^j x,x)\le \frac{2}{\sqrt{k}}~\mbox{ for some }~1\le j \le \log^5 k \}
\]
obeys the upper bound $\mu (E_k)\le c_5k^{-\sigma}$ for any $\sigma>\frac\tau4$.
Let us note that a similar result has been shown in~\cite{CC13}, Lemma~4.1.

\subsubsection{Controlling the measure of the set of points whose neighborhoods have large overlaps with the sets $E_k$.}
As in \cite{C01}, we define the Hardy-Littlewood maximal function  $\mathcal{M}_l$ for $\phi(x)= 1_{E_l} (x)\rho(x)$ where 
$\rho(x)=\frac{d\mu}{dm} (x)$, so that 
\[
 \mathcal{M}_l(x):=\sup_{a>0}\frac{1}{m(B(x,a))}\int_{B(x,a)} 1_{E_l}(y)\rho(y)\,dm(y).
\]
Hence (cf.~\cite[Page 96]{F99})
\[
m( |\mathcal{M}_l|>C)\le \frac{\|1_{E_l} \rho \|_1}{C}
\]
where $\|\cdot\|_1$ is the $\mathscr{L}^1$ norm with respect to $m$. 
Let 
\[
F_k:=\{ x: \mu (B (x,k^{-\gamma/2})\cap E_{k^{\gamma/2}})\ge  (k^{-\gamma\beta /2} )k^{-\gamma}
\]
Then $F_{k}\subset \{\mathcal{M}_{k^{\gamma/2}}>k^{-\gamma\beta /2} \}$ and hence
\[
m(F_k) \le \mu (E_{k^{\gamma/2}})k^{\gamma\beta /2}\le C k^{-\gamma\sigma/2}k^{\gamma\beta /2}.
\]
If we take $0<\beta <\sigma/2$ and $\gamma >\sigma/4$ then  for some $\delta>0$, 
$ k^{-\gamma\sigma/2}k^{\gamma\beta /2}<k^{-1-\delta}$ and hence 
\[
\sum_k m(F_k)<\infty.
\]
Thus by Borel-Cantelli  for $m$-a.e.\ (hence $\mu$-a.e.) $\zeta\in X$  there exists $N(\zeta)$ such that 
$\zeta \not \in F_k$ for all $k>N(\zeta)$. 
Thus along the subsequence $n_k=k^{-\gamma/2}$,  
$\mu (U_{n_k}\cap T^{-j}U_{n_k}) \le n_k^{-1-\delta}$ for $k>N(\zeta)$ 
where as before $U_n=\{X_0>u_n\}$ (and thus $T^{-j}U_n=\{X_0\circ T^j>u_n\}$).
This is sufficient to obtain an estimate for  all $u_n$.   For if $k^{\gamma/2} \le n \le (k+1)^{\gamma/2}$  
then
$\mu (U_n\cap T^{-j}U_n) \le \mu (U_{n_k}\cap T^{-j}U_{n_k})\le n_k^{-1-\delta} \le  2n^{-1-\delta}$ for all $n$ large enough as $(\frac{k+1}{k})^{\gamma/2}\to1$.

We now control the iterates $1\le j \le N_0$. If $\zeta$ is not periodic then 
$\min_{1\le i <j \le N_0} d(T^i\zeta, T^j \zeta) \ge s(\zeta)>0$
and hence $\mu (U_n\cap T^{-j}U_n)=0$  for all $1\le j \le N_0$ and  $n$ large enough.

Since $u_n$ was chosen so that $n\mu(U_n)\to 1$, we get
\[
\mu (U_n\cap T^{-j}U_n) \le 2n^{-1-\delta}
\]
for any $1\le j\le \log^5n$, and consequently
\[
\lim_{n\rightarrow \infty} n \sum_{j=1}^{\log^5n} \mu (U_n\cap T^{-j}U_n)=0.
\]

\subsubsection{Accounting for exceedances between $\log^5n$ and $\sqrt{n}$.}
 We use  exponential decay of correlations to  show
\begin{equation}\label{limiting.short.terms}
 \lim_{n\rightarrow \infty} n \sum_{j=\log^5n}^{p=\sqrt{n}} \mu (U_n\cap T^{-j}U_n)=0.
\end{equation}
As before, we approximate the indicator function $1_{U_n}$ of  the set $U_n$ 
by a suitable Lipschitz function. 
Recall that $U_n$ is a ball of some radius $r_n \sim \frac{1}{\sqrt{n}}$ centered at the point 
 $\zeta$.  We define $\Phi_n$ to be $1$ inside $B(\zeta,r_n-n^{-\frac{2}{\xi}})$,
  where $\xi$ comes from Assumption A, and decaying to $\Phi_n=0$ on $X\setminus U_n$.
   The Lipschitz norm of $\Phi_n$ is  then bounded by $n^{\frac{2}{\xi}}$. 
Thus
\begin{eqnarray*}
|\int 1_{U_n}(1_{U_n} \circ T^j)\,d\mu - (\int 1_{U_n}~d\mu)^2 | &\le& |\int \Phi_n(\Phi_n \circ T^j)\,d\mu -(\int \Phi_n~d\mu)^2|\\
&+&|(\int \Phi_n~d\mu)^2- (\int 1_{U_n}~d\mu)^2|\\
&+& | \int 1_{U_n}(1_{U_n} \circ T^j)\,d\mu -\int \Phi_n(\Phi_n\circ T^j)\,d\mu |.
\end{eqnarray*}
If  $ (\log n)^5\le j \le p=\sqrt{n}$ then we obtain by decay of correlations for the first term
$$
|\int \Phi_n (\Phi_n \circ T^j)\,d\mu -(\int \Phi_n~d\mu)^2  |\le C n^{\frac{4}{\xi}} \theta^{j}\le \frac{C}{n^2}
$$ 
if  $n$  is sufficiently large. For the second term we obtain for $n$ large enough
$$
|(\int \Phi_n d\mu)^2-(\int 1_{U_n}~d\mu)^2|\le\mu(A_{r_n,n^{-2/\xi}})\le(n^{-2/\xi})^{\xi}< C  n^{-2}.
$$
Similarly we estimate the third term as follows
 $$
 |\int \Phi_n (\Phi_n \circ T^j)\,d\mu - \int 1_{U_n} (1_{U_n}\circ T^j)\,d\mu| 
\le2\mu(A_{r_n,n^{-2/\xi}}) \le\frac{C}{n^2}. 
$$
 Hence equation~\eqref{limiting.short.terms} is satisfies which concludes the proof of Theorem~\ref{Poisson}.

 \section{Billiards with polynomial mixing rates}
 \label{sec:polynomial-rates}

\begin{proof}[Proof of Theorem~\ref{poly}]
 First suppose  $\zeta$ is a generic point in $M$. We may establish a Poisson limit law for nested balls about $\zeta$ by proving $D_3(u_n)$ and $D' (u_n)$ 
 as in the case of Sinai dispersing billiards for  the map $F: M\to M$ with respect to the measure
 $\mu_M$. To prove $D_3(u_n)$  note that local stable manifolds contract exponentially, Assumption A holds (as the measure $\mu_M(\cdot)=\frac{1}{\mu(M)} (\cdot\cap M)$)
 and  the exponential decay  of Equation~\eqref{Lip_infinity_decay} in the Lipschitz norm versus 
 $\mathscr{L}^{\infty}$ holds because we 
 have the structure of a Young Tower for $F:M\to M$. Hence $D_3 (u_n)$ holds for generic points $\zeta$ in $M$. These are the only ingredients of the proof
 for $D_3(u_n)$.
 
 The proof of $D'(u_n)$ also 
 proceeds in the same way as for Sinai dispersing billiards as the local unstable manifolds contract  uniformly under $F^{-1}$, the measure $\mu_M$ decomposes into 
 a conditional measure on the local unstable manifolds which is absolutely continuous with respect to Lebesgue measure. These are the only ingredients of the 
 proof of $D'(u_n)$ for Sinai dispersing billiards.  
 
 Finally we use Proposition~\ref{Extend}   to extend this result to generic points in phase space. 
 \end{proof}

 \begin{proof}[Proof of Proposition~\ref{Extend}]
 The argument below is built on adjustments of the proofs of \cite[Theorem~2.1]{BSTV03} and \cite[Theorem~5]{FFT13}.
Since $N$ is a simple point process, without multiple
  events, we may use a criterion proposed by Kallenberg
  \cite[Theorem~4.7]{K86} to show the stated convergence. Namely we
  need to verify that
\begin{enumerate}
\item $\E(N_n(I))\xrightarrow[]{n\to\infty}\E(N(I))$, for all $I\in
\mathcal S$;

\item $\mu(N_n(J)=0)\xrightarrow[]{n\to\infty}\mu(N(J)=0)$, for all
$J\in\mathcal R$,
\end{enumerate}
where $\E(\cdot)$ denotes the expectation with respect to $\mu$. As before let us put $U_n=\{X_0>u_n\}$.

The first condition follows trivially by definition of the point process $N_n$. In fact, let $a,b\in\R^+$ be such
that $I=[a,b)$, then, recalling that $v_n=1/\mu(U_n)$, we have
\begin{align*}
  \E(N_n(I))&=\E\left(\sum_{j=\lfloor v_na\rfloor+1}^{\lfloor v_nb\rfloor}
\I_{T^{-j}U_n}\right)=\sum_{j=\lfloor v_na\rfloor+1}^{\lfloor
v_nb\rfloor}\E(\I_{T^{-j}U_n})\\
&=\left(\lfloor v_nb\rfloor-(\lfloor v_na\rfloor+1)\right)\mu(U_n)\\
&\sim (b-a)v_n\mu(U_n)\xrightarrow[]{n\to\infty}(b-a)=\E(N(I)).
\end{align*}

To prove (2), note by \cite[Corollary~6]{Z07} we only need to show that 
$$\mu_M(N_n(J)=0)\xrightarrow[]{n\to\infty}\p(N(J)=0), \quad \text{for all
$J\in\mathcal R.$}$$

Let
$$
E_n(x):=\frac1n\sum_{i=0}^{n-1}r_{M}\circ F^i(x)
$$  
then by the ergodic theorem we get for $\mu$-a.e. $x\in M$:
$$
E_n(x)\to c:=\int_{ M}r_{M}~d\mu_M=\frac1{\mu(M)}
$$
where the final equality follows from Kac's Theorem. Moreover $c=v_n/\hat{v}_n$.

For $\mu$-a.e. $x\in M$, there exists a finite number $j(x, \eps)$ such that  $|E_n(x)-c|<\eps$ for all $n\ge j(x, \eps)$.  Let $\tilde G_{n}^\eps:=\{x\in M:j(x,\eps)<n\}$.  Moreover, we define $N=N(\eps)$ to be such that 
\begin{equation}
\label{eq:estimate1}
\hat\mu(\tilde G_N^\eps)>1-\eps.
\end{equation}

Since $$\left|\sum_{i=0}^{n-1}r_{M}(F^i(x))-cn\right|<\eps n \text{ for } x\in \tilde G_N^\eps \text{ and } n\ge N,$$
for all such $n$, there exists $s=s(x)$ with $|s|<\eps n$ such that $F^n(x)=T^{cn+s}(x)$.  
Since $r_{U_n}=\sum_{i=0}^{\hat{r}_{U_n}-1} r_M\circ F^i$, we obtain
$$
r_{U_n}(x)=c\hat r_{U_n}(x)+s
$$
for some $|s|<\eps\hat r_{U_n}(x)$ whenever $\hat r_{U_n}(x)\ge N$ and $x\in \tilde G_N^\eps$,
where we used that $c=v_n/\hat{v}_n$.

Note that since $U_{n+1}\subset U_n\forall n$ the sets $L_{N,n}^\eps:=\{\hat r_{U_n}> N\}$
are nested, i.e. $L_{N,n}^\eps \subset L_{N,n+1}^\eps\forall n$. 
Hence, as  $\mu_M(\hat r_{U_n}\leq j)\leq j\mu_M(U_n)\to 0$, 
as $n\to\infty$ there exists  $N'=N'(\eps)$ sufficiently large such that 
\begin{equation}
\label{eq:estimate2}
\mu_M((L_{N,n}^\eps)^c)<\eps.
\end{equation}
for all $n>N'$.

Let $J_{sup}=\sup J+1$. Observe that
\begin{align*}
\mu_{M}\left(N_n ([0,J_{\sup}))>\kappa\right)
&\leq\mu_{M}\left(\hat N_n (v_n/\hat v_n [0,J_{\sup}))>\kappa\right)\\
&=\mu_{M}\left(\hat N_n( c[0,J_{\sup}))>\kappa\right)\xrightarrow[]{u\to u_F}\p(N([0,cJ_{\sup})>\kappa)\xrightarrow[]{\kappa\to\infty}0.
\end{align*}
This implies that we can choose $K(J)$ independent of $\eps$ such that 
$\mu_{M}\left(N_n ( J)>K(J)\right)<\eps$.

Also, for any $x\in M$ and $i=2,\ldots$, let $r_{U_n}^{(i)}(x):= r_{U(n)}(T^{r_{U_n}^{(i-1)}})(x)$ where
 $r_{U_n}^{(1)}:=r_{U_n}$ and put $\tau^i_{U_n}=\tau^{i-1}_{U_n}+r_{U_n}^{(i)}$, with 
 $\tau^1_{U_n}=r_{U_n}$ for the $i$th return  time to $U_n$ under the map $T$.
 Similarly we define $\hat r_{U_n}^{(i)}(x):= \hat r_{U_n}(F^{\hat r_{U_n}^{(i-1)}})(x)$ 
 and $\hat\tau^i_{U_n}=\hat\tau^{i-1}_{U_n}+\hat{r}_{U_n}^{(i)}$ for the $i$th return time to $U_n$ 
 under $F$.
We will use the ergodic theorem to approximate  $\tau^i_{U_n}(x)$ by $c\hat\tau^i_{U_n}(x)$
on a large set.

For that purpose put
$$
E(u_n,J,\eps):=  \{N_n( J)=0\}\cap\{N_n ([0,J_{\sup}))>K\}\cap 
\left(\bigcap_{j=1}^{K}T^{-\tau^{j}_{U_n}}\left( \tilde G_{N}^{\eps/K}\cap L_{N,N'}^{\eps/K}\right)\right)
$$
By stationarity, \eqref{eq:estimate1} and \eqref{eq:estimate2}, for $K$, $N$  and $n$ sufficiently large  we have 
\begin{multline}
\Big|\mu_{M}(N_n( J)=0)-\mu_{M} \left(E(u_n,J,\eps)\right)\Big|\\ \leq \mu_M(N_n ([0,J_{\sup}))>K)+K\mu_{M}\left( \left(\tilde G_{N}^{\eps/K}\right)^c\right)+K\mu_{M}\left(\left(L_{N,N'}^{\eps/K}\right)^c\right)\leq 3\epsilon.
\label{eq:approx1linha}
\end{multline}
By definition of $\tilde G_{N}^{\eps/K}$  we now conclude that for $x\in E(u_n,J,\eps)$ and
 $j=1,\ldots, K$, there exist $|s_j|<\eps \hat r_{U_n}^{(j)}(x)$ such that
$$
r_{U_n}^{(j)}(x)=c\hat r_{U_n}^{(j)}(x)+s_j.
$$
Hence
\begin{equation}\label{eq:inter-cluster}
\left|\tau_{U_n}^j-c\hat\tau_{U_n}^j\right|\le K\eps
\end{equation}  
on $E(u_n,J,\eps)$ for $j=1,\dots,K$.
Since $\hat v_n=v_n/c$, from \eqref{eq:inter-cluster}, we get that for $x\in E(u_n,J,\eps)$ and every $j=1,\ldots, K$
\begin{equation}
\label{eq:rel1}
\tau_{U_n}^{j}(x)\in v_n J\quad\Rightarrow\quad \hat\tau_{U_n}^{j}(x)\in\hat v_n(1+B(0,K\eps/c))J
\end{equation}
and also
\begin{equation}
\label{eq:rel2}
\hat\tau_{U_n}^{j}(x)\in\hat v_nJ\quad\Rightarrow\quad \tau_{U_n}^{j}(x)\in v_n(1+B(0,K\eps/c))J,
\end{equation}
where we used $(1+B(0,\delta))J=\{x=(1+y)z: |y|<\delta, z\in J\}$.
Hence,
$$
\mu_{M}(\hat N_n(J)=0)\leq\mu_{M} \left(E(u_n,(1+B(0,K\eps/c))J,\eps)\right)
\leq\mu_{M}(\hat N_n ((1+B(0,2K\eps/c))J)=0).
$$
Taking limits as $n\to\infty$, by hypothesis, we get that
 $$\p(N(J)=0)\leq\mu_{M} \left(E(u_n,(1+B(0,K\eps/c))J,\eps)\right)\leq \p(N((1+B(0,2K\eps/c))J)=0).$$ 

Finally, using \eqref{eq:approx1linha} and that $\lim_{\delta\to0}\p(N((1+B(0,\delta))J)=0)=\p(N(J)=0)$
(as $J$ is a finite union of disjoint intervals), we get 
$$
\lim_{n\to\infty}\mu_{M}( N_n (J)=0)=\p(N(J)=0).
$$
\end{proof}

 \section{Some applications of Theorem~\ref{poly} to polynomially mixing billiards.}
 
 Chernov and Zhang~\cite{CZ05} give examples of polynomially mixing billiards to which Theorem~\ref{poly} applies. For example
  to semi-dispersing billiards in rectangles with internal scatters, 
 Bunimovich stadia, Bunimovich flower-like regions and skewed stadia (see the figures above). These billiards have polynomial mixing rates yet
 exhibit Poisson return time statistics.





\begin{figure}[h!]
        \centering
        \begin{subfigure}[b]{0.3\textwidth}
                \includegraphics[width=\textwidth]{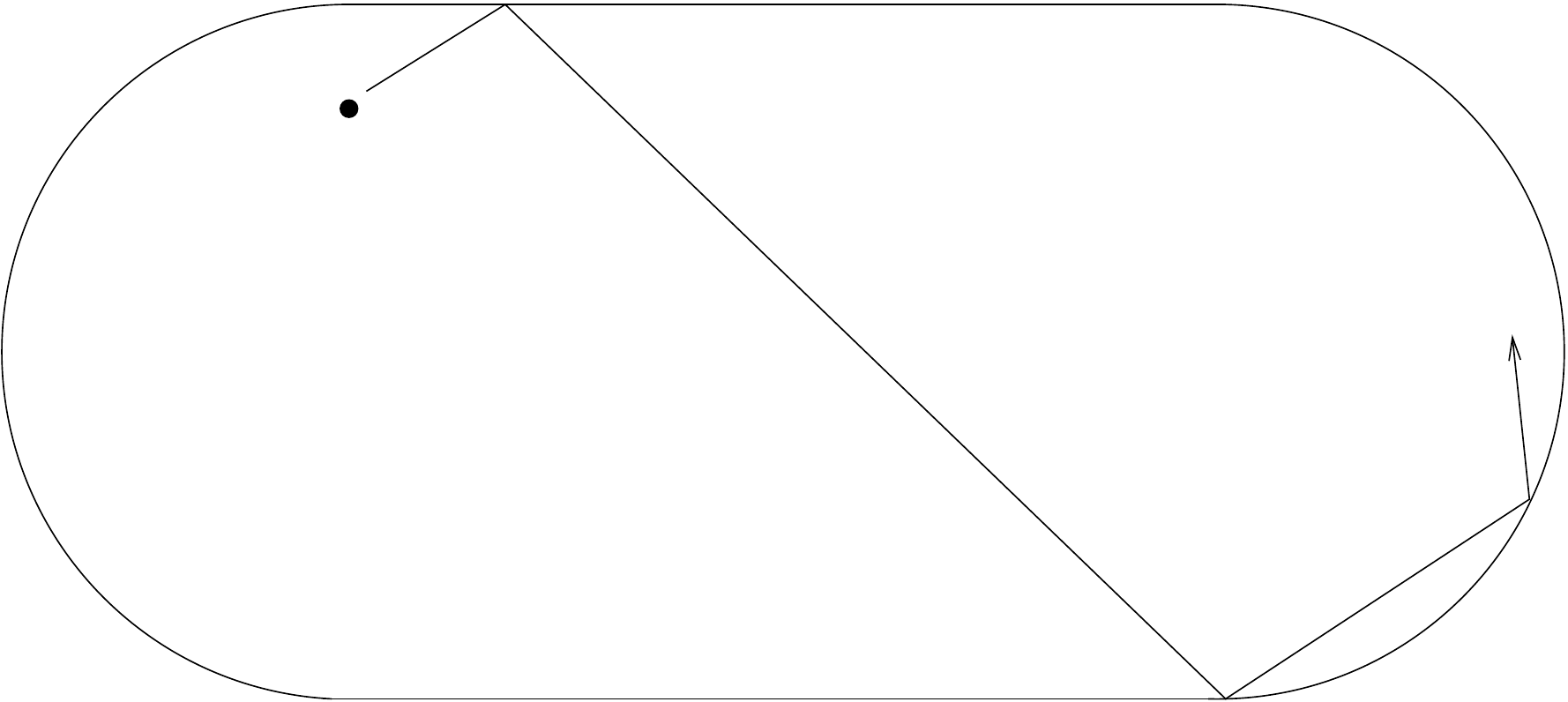}
                \caption{A Bunimovich stadium.}
                \label{fig:bunimovich}
        \end{subfigure}%
        ~ 
        \begin{subfigure}[b]{0.3\textwidth}
                \includegraphics[width=\textwidth]{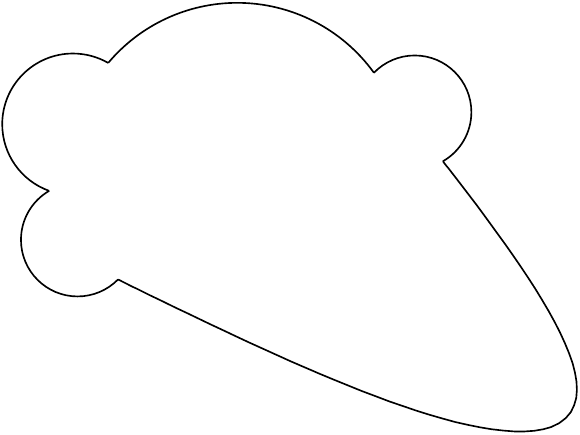}
                \caption{Flower like stadia.}
                \label{fig:flowers}
        \end{subfigure}
        ~ 
                \caption{Some polynomially mixing billiards.}\label{fig:animals}
\end{figure}



 \bibliographystyle{amsalpha}

\bibliography{ExtremeBilliards}

\providecommand{\bysame}{\leavevmode\hbox to3em{\hrulefill}\thinspace}
\providecommand{\MR}{\relax\ifhmode\unskip\space\fi MR }
\providecommand{\MRhref}[2]{%
  \href{http://www.ams.org/mathscinet-getitem?mr=#1}{#2}
}
\providecommand{\href}[2]{#2}
\begin{thebibliography}{BSTV03}

\bibitem[Bal00]{B00}
Viviane Baladi, \emph{Positive transfer operators and decay of correlations},
  Advanced Series in Nonlinear Dynamics, vol.~16, World Scientific Publishing
  Co. Inc., River Edge, NJ, 2000. \MR{1793194 (2001k:37035)}

\bibitem[BSC90]{BSC90}
L.~A. Bunimovich, Ya.~G. Sina{\u\i} and N.~I. Chernov, \emph{Markov partitions
  for two-dimensional hyperbolic billiards}, Uspekhi Mat. Nauk \textbf{45}
  (1990), no.~3(273), 97--134, 221. \MR{1071936 (91g:58155)}

\bibitem[BSC91]{BSC91}
\bysame, \emph{Statistical properties of two-dimensional hyperbolic billiards},
  Uspekhi Mat. Nauk \textbf{46} (1991), no.~4(280), 43--92, 192. \MR{1138952
  (92k:58151)}

\bibitem[BSTV03]{BSTV03}
H.~Bruin, B.~Saussol, S.~Troubetzkoy and S.~Vaienti, \emph{Return time
  statistics via inducing}, Ergodic Theory Dynam. Systems \textbf{23} (2003),
  no.~4, 991--1013. \MR{MR1997964 (2005a:37004)}

\bibitem[CC13]{CC13}
J.-R. Chazottes and P.~Collet, \emph{Poisson approximation for the number of
  visits to balls in non-uniformly hyperbolic dynamical systems}, Ergodic
  Theory Dynam. Systems \textbf{33} (2013), no.~1, 49--80. \MR{3009103}

\bibitem[Che99]{C99}
N.~Chernov, \emph{Decay of correlations and dispersing billiards}, J. Statist.
  Phys. \textbf{94} (1999), no.~3-4, 513--556. \MR{1675363 (2000j:37044)}

\bibitem[Col01]{C01}
P.~Collet, \emph{Statistics of closest return for some non-uniformly hyperbolic
  systems}, Ergodic Theory Dynam. Systems \textbf{21} (2001), no.~2, 401--420.
  \MR{MR1827111 (2002a:37038)}

\bibitem[CZ05]{CZ05}
N.~Chernov and H.-K. Zhang, \emph{Billiards with polynomial mixing rates},
  Nonlinearity \textbf{18} (2005), no.~4, 1527--1553. \MR{2150341
  (2006d:37060)}

\bibitem[FFT10]{FFT10}
Ana Cristina~Moreira Freitas, Jorge~Milhazes Freitas and Mike Todd,
  \emph{Hitting time statistics and extreme value theory}, Probab. Theory
  Related Fields \textbf{147} (2010), no.~3-4, 675--710. \MR{2639719
  (2011g:37015)}

\bibitem[FFT11]{FFT11}
\bysame, \emph{Extreme value laws in dynamical systems for non-smooth
  observations}, J. Stat. Phys. \textbf{142} (2011), no.~1, 108--126.
  \MR{2749711 (2012a:60149)}

\bibitem[FFT13]{FFT13}
\bysame, \emph{The compound {P}oisson limit ruling periodic extreme behaviour
  of non-uniformly hyperbolic dynamics}, Comm. Math. Phys. \textbf{321} (2013),
  no.~2, 483--527. \MR{3063917}

\bibitem[Fol99]{F99}
Gerald~B. Folland, \emph{Real analysis}, second ed., Pure and Applied
  Mathematics (New York), John Wiley \& Sons Inc., New York, 1999, Modern
  techniques and their applications, A Wiley-Interscience Publication.
  \MR{1681462 (2000c:00001)}

\bibitem[Fre13]{F13}
Jorge~Milhazes Freitas, \emph{Extremal behaviour of chaotic dynamics}, Dyn.
  Syst. \textbf{28} (2013), no.~3, 302--332.

\bibitem[GHN11]{GHN11}
Chinmaya Gupta, Mark Holland and Matthew Nicol, \emph{Extreme value theory and
  return time statistics for dispersing billiard maps and flows, {L}ozi maps
  and {L}orenz-like maps}, Ergodic Theory Dynam. Systems \textbf{31} (2011),
  no.~5, 1363--1390. \MR{2832250}

\bibitem[HLV05]{HLV05}
N.~Haydn, Y.~Lacroix and S.~Vaienti, \emph{Hitting and return times in ergodic
  dynamical systems}, Ann. Probab. \textbf{33} (2005), no.~5, 2043--2050.
  \MR{MR2165587 (2006i:37006)}

\bibitem[HNT12]{HNT12}
Mark Holland, Matthew Nicol and Andrei T{\"o}r{\"o}k, \emph{Extreme value
  theory for non-uniformly expanding dynamical systems}, Trans. Amer. Math.
  Soc. \textbf{364} (2012), no.~2, 661--688. \MR{2846347 (2012k:37064)}

\bibitem[HWZ13]{HWZ}
N.~Haydn, N.~Winterberg and R.~Zweim\"uller, \emph{Mixing limit theorems for
  ergodic transformations}, to appear in Ergodic Theory, Open Dynamics, and
  Coherent Structures. Springer Proceedings in Mathematics \& Statistics.,
  Preprint 2013.

\bibitem[Kal86]{K86}
Olav Kallenberg, \emph{Random measures}, fourth ed., Akademie-Verlag, Berlin,
  1986. \MR{854102 (87k:60137)}

\bibitem[Lea74]{L73}
M.~R. Leadbetter, \emph{On extreme values in stationary sequences}, Z.
  Wahrscheinlichkeitstheorie und Verw. Gebiete \textbf{28} (1973/74), 289--303.
  \MR{MR0362465 (50 \#14906)}

\bibitem[LLR83]{LLR83}
M.~R. Leadbetter, Georg Lindgren and Holger Rootz{{\'e}}n, \emph{Extremes and
  related properties of random sequences and processes}, Springer Series in
  Statistics, Springer-Verlag, New York, 1983. \MR{MR691492 (84h:60050)}

\bibitem[Mar04]{M04}
Roberto Markarian, \emph{Billiards with polynomial decay of correlations},
  Ergodic Theory Dynam. Systems \textbf{24} (2004), no.~1, 177--197.
  \MR{2041267 (2005c:37060)}

\bibitem[You98]{Y98}
Lai-Sang Young, \emph{Statistical properties of dynamical systems with some
  hyperbolicity}, Ann. of Math. (2) \textbf{147} (1998), no.~3, 585--650.
  \MR{MR1637655 (99h:58140)}

\bibitem[You99]{Y99}
\bysame, \emph{Recurrence times and rates of mixing}, Israel J. Math.
  \textbf{110} (1999), 153--188. \MR{MR1750438 (2001j:37062)}

\bibitem[Zwe07]{Z07}
Roland Zweim{\"u}ller, \emph{Mixing limit theorems for ergodic
  transformations}, J. Theoret. Probab. \textbf{20} (2007), no.~4, 1059--1071.
  \MR{2359068 (2008h:60119)}

\end{thebibliography}

 \end{document}